\numberwithin{equation}{section}
\def\R{\mathbb{R}}	
\def\N{\mathbb{N}}
\newtheorem{lemma}{Lemma}[section]
\newtheorem{proposition}{Proposition}[section]
\newtheorem{theorem}{Theorem}[section]
\newtheorem{corollary}{Corollary}
\newtheorem{definition}{Definition}[section]
\newtheorem{remark}{Remark}
\title[\textbf{ }]{Stabilization of two strongly coupled hyperbolic equations in exterior domains}
\author[{L.Aloui  and H.Azaza}]{{
\small  L.Aloui $^{1,2}$ and H.Azaza $^1$
\\
$^1$ LAMMDA-ESSTHS,  Universit\'e de Sousse, Tunisia. \\
$^2$ Universit\'e de Tunis El Manar, Tunisia. 
}}
\begin{document}
\begin{abstract}
In this paper we study the behavior of the total energy and the $L^2$-norm of solutions of  two coupled hyperbolic equations by velocities  in exterior domains. Only one of the two equations is directly damped by a localized damping term. We show that, when the damping set contains the coupling one and the coupling term is effective at infinity and on captive region, then the total energy decays uniformly and the $L^2$-norm of smooth solutions is bounded. In the case of two  Klein-Gordon equations with equal speeds we deduce an exponential decay of the energy.
	\end{abstract}			
	\keywords{ Damped wave equation, Klein-Gordon equation, Energy decay, exterior domain, observability, Stability}
	\thanks{Email : lassaad.aloui@fst.utm.tn.   \\ houdaazaza@gmail.com.}
\maketitle
	\date{}
\section{Introduction and statement of the results}
Let $\Omega$ be a domain of $\R^d$ ,$d \geqslant 2$. We denote by $\Delta$ the Laplace operator on $\Omega$ with Dirichlet boundary condition. We  consider the following hyperbolic equation with localized linear damping
\begin{equation} 
 \label{pbonde}
  \left\{
    \begin{aligned}
     &  \partial_t^2 u -\Delta u +mu+a(x)\partial_t u=0 & \text{ in } \R_+ \times \Omega,\\&
                 u=0 &  \text{ on }  \R_+ \times \Gamma, \\&
                 (u(0,.),\partial_t u(0,.))=(u_0,u_1)&  \ in \ \Omega, 
    \end{aligned}
  \right.
\end{equation}
where $ a\in L^\infty(\Omega)$ is a nonnegative smooth function    and $m \in \R_+ $. It is easy to verify that the energy given by \begin{align}
\label{eneg11}
E_u(t) = \frac{1}{2}\int_\Omega |\partial_t u(t,x)|^2 + |\nabla u(t,x)|^2 + m |u(t,x)|^2 \ dx, 
\end{align} 
is non-increasing and 
\begin{align*}
E_u(0)= \int_0^t\int_\Omega a(x)|\partial_t u(t,x)|^2\ dx dt+ E_u(t), \ t>0.
\end{align*}

When $m=0$, the stabilization problem for the linear damped wave equation has been studied by several authors. More precisely, when $\Omega$  is bounded,  the uniform  decay  of the total  energy is equivalent to the geometric control condition of Bardos et al \cite{e8}. On the other hand, if $\Omega$ is not bounded then, in general, the decay rate of the total energy  cannot be uniform. Indeed, in the  whole   space,i.e.  $ \Omega= \R^d$,  Matsumura \cite{e11} obtained a precise $L^p-L^q$ type decay estimate for solutions of $\eqref{pbonde}$, when $a(x)=1$,

\begin{equation}
E_u(t) \leqslant C(1+t)^{-1-d(\frac{1}{i}-\frac{1}{2})} I_i^2,
\end{equation}
\begin{equation}
\|u(t,.)\|_{L^2}^2 \leqslant C (1+t)^{d(\frac{1}{i}-\frac{1}{2})} I_i^2,
\end{equation}
where $C$ is a positive constant, $i \in [1,2 ]$ and $I_i^2 = \|u_0\|_{H^1}^2 + \|u_1\|_{L^2}^2 + \|u_0\|_{L^i}^2 + \|u_1\|_{L^i}^2$. The proof in \cite{e11} is based on a Fourier transform method. In the case  of  exterior  domains and when $a(x) \geqslant a^- >0$ on $\Omega$, it is easy to show that  the weak solution $u$  of  the system $\eqref{pbonde}$  satisfies
\begin{equation}
\label{0101}
E_u(t)\leqslant C(1+t)^{-1} I_2^2\ and \  \|u(t)\|_{L^2}^2 \leqslant C I_2^2, \  for  \  all \  t \geqslant 0.
\end{equation}
In \cite{e10}, Nakao obtained the estimate $\eqref{0101}$ for a damper which is positive near infinity and near a part of the boundary (Lions's condition).  Daoulatli  in \cite{e3} generalized this result by assuming  that each trapped ray meets the damping region which is also effective at infinity. Recently, Aloui et al \cite{e012} established the uniform stabilization of the total energy for the system $\eqref{pbonde}$  when the  initial data are compactly supported. They proved that the rate of decay turns out to be the same as those of the heat equation, which shows that  the effective damper at space infinity strengthens the parabolic structure in the equation. 
\\ 

In the case  $m>0$, the energy $\eqref{eneg11}$ contains the $L^2$ norm. Then, using the semi-group property, the type of decay $\eqref{0101}$ implies the expnential one
\begin{align}
E_u (t) \leqslant C e^{-\delta t} E_u(0), \ for \ all \ t \geqslant 0,
\end{align}
where $C , \delta$ positive constants. In \cite{e01} Zuazua considered the nonlinear Klein-gordon equations with dissipative term and he proved the exponential decay of energy through the weighted energy method. This result has been generalized by Aloui et al \cite{e03}  for  more general nonlinearities. We refer the reader to the works of Dehman et al \cite{e02} and Laurent et al \cite{e04} for related results. 
\\

In this paper  we will study  the stabilization problem for a system of two coupled hyperbolic equations on  exterior domain. More precisely, let $O$ be a compact domain of $\R^d$ with $\mathcal{C}^\infty$ boundary $\Gamma = \partial O$ and $\Omega =\R^d \backslash O$
 \begin{equation} 
 \label{pb311}
  \left\{
    \begin{aligned}
     &  \partial_t^2 u -\Delta u +m_1u+b(x)\partial_t v +a(x)\partial_t u=0 & \text{ in } \R_+ \times \Omega, \\ &
  \partial_t^2 v -\gamma^2\Delta v +m_2v-b(x) \partial_t u =0&\text{ in } \R_+ \times \Omega ,\\&
                 u=v=0 &  \text{ on }  \R_+ \times \Gamma, \\&
                 (u(0,.),\partial_t u(0,.))=(u_0,u_1)&\ in\ \Omega, \\&
                 (v(0,.),\partial_t v(0,.))=(v_0,v_1)& \ in \ \Omega,
    \end{aligned}
  \right.
\end{equation}
where $b\in L^\infty(\Omega) $ is  a smooth function, $m_1,m_2\in \R_+$  and  $\gamma$ is a positive constant.\\ 
 We associate to the system $\eqref{pb311}$ the energy functional given by

\begin{align*}
E_{u,v}(t)&= \frac{1}{2}\int_\Omega |\nabla u(t,x)|^2 +|\partial_t u(t,x)|^2  +m_1|u(t,x)|^2\ dx \\& + \frac{1}{2}\int_\Omega \gamma^2 |\nabla v(t,x)|^2 +|\partial_t v(t,x)|^2 + m_2|v(t,x)|^2 \ dx.
\end{align*}
 Let $\mathcal{H}= \Big(H_D^{1}(\Omega)\times L^2(\Omega)\Big)^2$ be the completion of $(C_0^\infty (\Omega))^4$ with respect to the norm 
\begin{align*}
\|(w_0,w_1,w_2,w_3)\|_{\mathcal{H}}= \Big( \int_\Omega |\nabla w_0|^2+ \gamma^2|\nabla w_2|^2+  m_1|w_0|^2 +m_2|w_2|^2   +|w_1|^2+|w_3|^2 \ dx \Big)^{\frac{1}{2}}.
\end{align*}
The linear evolution equation $\eqref{pb311}$ can be rewritten under the form
\begin{equation}
 \label{pb32}
  \left\{
    \begin{aligned}
     &  \mathcal{U}_t+ \mathcal{A}\mathcal{U}=0,\\&
     \mathcal{U}(0)= \mathcal{U}_0 \in \mathcal{H},
    \end{aligned}
  \right.
\end{equation}
where 
$$\mathcal{U}=\begin{pmatrix} u\\ \partial_t u \\ v \\ \partial_t v

\end{pmatrix}, \mathcal{U}_0= \begin{pmatrix} u_0\\ u_1 \\ v_0 \\ v_1

\end{pmatrix}$$
and the unbounded operator $ \mathcal{A}$ on $ \mathcal{H}$ with domain 
\begin{align*}
D(\mathcal{A})= \{\mathcal{U} \in \mathcal{H}, \mathcal{A}\mathcal{U} \in \mathcal{H}\}
\end{align*} is defined by
$$ \mathcal{A}= \begin{pmatrix}
0& -Id &0&0 \\ -\Delta +m_1Id& a & 0& b \\ 0& 0&0& -Id \\ 0& -b & -\gamma^2\Delta+m_2Id & 0
\end{pmatrix}.$$
From the linear semi-group theory,  we can infer that for $\mathcal{U}_0 \in \mathcal{H}$ the problem $\eqref{pb32}$ admits a unique solution $\mathcal{U}\in C^0([0,+\infty[, \mathcal{H})$. \\ In addition, if $\mathcal{U}_0 \in D(\mathcal{A}^n)$, for $n \in \N$, then the solution $\mathcal{U} \in  \displaystyle\bigcap_{i=0}^n C^{n-i} (\R_+, D(\mathcal{A}^i))$.
\\
It is easy to verify that
\begin{equation}
\label{decroissance}
\frac{d}{dt}E_{u,v}(t)= -\int_\Omega a(x)|\partial_t u(t,x)|^2\ dx.
\end{equation}
Thus $E_{u,v}(t)$ is decreasing with respect to time.
\\
In bounded domain and under some  geometric conditions, Kapitonov \cite {e91}  considered  the case  of equal speeds ($\gamma =1$) and proved  the uniform decay
\begin{equation}
\label{stability}
   E_ {u, v}(t) \leqslant Me ^ {- \beta t} E_ {u, v} (0), \ for \ all \ t \geqslant 0,
   \end{equation}
where $ M, \beta> 0 $. In \cite{e92}, Ammar et al  studied the indirect stability of system \eqref{pb311}  in the case of one-dimensional space and when $a$ and $b$ have disjoint supports. More precisely, they  established that the "classical" internal damping applied to only one of the equations never gives exponential stability if  $\gamma \neq 1$ and for the case  $\gamma =1$  they gave an explicit necessary and sufficient conditions for the stability to occur. In \cite{e93}, Toufayli generalized this result  for different speeds  and established, under some geometric conditions,  a polynomial stability.
\\

The problem of the indirect stabilization  has been also studied for coupled wave equations by displacements (weakly coupled). Indeed Alabau et al  \cite{e1} considered the following system
 \begin{equation}
 \label{pb3333}
  \left\{
    \begin{aligned}
     &  \partial_t^2 u(t,x) -\Delta u(t,x) +b(x) v(t,x) +a(x)\partial_t u(t,x)=0&\text{ in } \R_+ \times \Omega, \\ &
  \partial_t^2 v(t,x) -\Delta v(t,x) +b(x) u(t,x) =0&\text{ in } \R_+ \times \Omega ,\\&
                 u=v=0 &  \text{ on }  \R_+ \times \Gamma, \\&
                 (u(0,.),\partial_t u(0,.))=(u_0,u_1)& \ in \ \Omega, \\&
                 (v(0,.),\partial_t v(0,.))=(v_0,v_1)& \ in\ \Omega,
    \end{aligned}
  \right.
\end{equation}
where $\Omega$ is a bounded domain. They proved that the system $\eqref{pb3333}$ can not be exponentially stable and when the coupling term is  constant they established a  polynomial decay. In \cite{e2} Alabau et al improved this result  by assuming that the regions $\{a > 0\}$ and $\{b > 0\}$ both verify GCC and the coupling term satisfies a smallness assumption. This result has been generalized by Aloui et al \cite{e13}, for more natural smallness condition on the infinity norm of the coupling term. 
 Recently, Daoulatli \cite{e15} showed  that the rate of energy  decay  for solutions to the system  on a compact manifold with a boundary is determined from a first order differential equation when  the   coupling zone  and the damping zone verify the GCC.  
\\

In the sequel, we fix a constant $R_0 > 0$ such that $$O \subset B_0 = \{x \in \R^d, |x| < R_0\}.$$ Suppose that there exist two positive constants $a^-$ and $b^-$ such that the damping set $\omega_a :=\{a(x) >a^->0\}$  and the coupling set $\omega_b:=\{b(x)> b^->0\}$  are non-empty open subsets of $\Omega$. As usual for damped wave (resp. Klein-Gordon) equations, we have
to make some geometric assumptions on the sets $\omega_a$ and $\omega_b$ so that the energy of a single wave decays sufficiently rapidly at infinity. Here, we shall use the Geometric control condition.
\begin{definition}
\label{def}
(see \cite{e8,e9}) We say that a set  $\omega$ of $\Omega$ satisfies the geometric control condition \textbf{GCC} if there exists $T >0$ such that from every point in $\Omega$ the generalized geodesic meets the set $\omega$ in a time $t <T$.
\end{definition}
If $\omega$ satisfies \textbf{GCC}, we set
 $$ T_\omega= \inf\{T>0, (\omega,T) \text{ satisfies \textbf{GCC}}\}.$$
We need also the following assumptions
\begin{itemize}
\item[$(\mathcal{A}_1)$] $ supp(b) \ \subset  \ supp(a)$.


\item[$(\mathcal{A}_2)$] There exists $ R_1> R_0 $ such that

\begin{itemize}
\item[$\bullet$] $B_{R_1}^c \subset \omega_a\cap  \omega_b$,   if $\ (m_1,m_2)\in \R_+\times \R_+^*$, 
\item[$\bullet$]  $B_{R_1}^c\subset \omega_b$ and $a(x)=\beta b(x)$,  $\ |x| \geqslant R_1$, for some $\beta>0$,  if $\  m_1=m_2=0$.

\end{itemize}
\end{itemize}

\

 For $\gamma \in \R_+^*$, we set $$I_{\gamma}^2 =  E_{u,v}(0)  + (1-\frac{1}{\gamma^2})^2 E_{\partial_t u ,\partial_t v}(0) +\|(u,v)(0)\|_{L^2(\Omega)}^2 $$   and $$
 \mathcal{H}_\gamma=\begin{cases} \mathcal{H}\cap (L^2(\Omega))^4,  \ if  \ \gamma =1,  \\   D(\mathcal{A})\cap ( L^2(\Omega))^4 ,  \ if \  \gamma \neq 1. \end{cases}$$
\\ With this notation, we can state the stability result for  the system $\eqref{pb311}$. 
\begin{theorem}
\label{theoreme1}
Let $\gamma \in  \R_+^*$ and  $(m_1,m_2) \in  \lbrace (0,0)\rbrace  \cup \R_+\times \R_+^* $. We assume that $\omega_b$ satisfies the \textbf{GCC} and that the assumptions $(\mathcal{A}_1)$ and $(\mathcal{A}_2)$ hold. Then for any solution  $(u,v)$ of the system $\eqref{pb311}$ with initial data $(u_0,u_1, v_0,v_1) \in \mathcal{H}_\gamma$, we have 
\begin{equation}
\label{uv}
E_{u,v}(t) \leqslant C(1+t)^{-1}I_{\gamma}^2 \text{ and}\quad \|(u,v)(t)\|_{L^2}^2 \leqslant CI_{\gamma}^2,    \text{ for all }  t \geqslant 0,
\end{equation}
where $C$ is positive constant. In addition for $(u_0,u_1, v_0,v_1) \in \mathcal{H} $, $E_{u,v}(t)$ converges to zero as $t$ goes to infinity.
\end{theorem}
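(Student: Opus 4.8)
The strategy I would follow is to reduce the two decay statements in \eqref{uv} to a single observability-type inequality for the coupled system and then extract the rate by an energy-iteration argument, exactly as the polynomial bound \eqref{0101} is obtained for one damped equation. Concretely, I would look for $T>0$ and $C>0$, independent of the time origin $S\geqslant 0$, such that
\[
E_{u,v}(S+T)\;\leqslant\; C\int_S^{S+T}\!\!\int_\Omega a(x)\,|\partial_t u|^2\,dx\,dt\;+\;C\int_S^{S+T}\!\|(u,v)(t)\|_{L^2(B_{R_1})}^2\,dt .
\]
The first term on the right is precisely the dissipated energy $E_{u,v}(S)-E_{u,v}(S+T)$ by \eqref{decroissance}, while the second is a compact lower-order term supported in the bounded region $B_{R_1}$. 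The polynomial (rather than exponential) character of the final rate is inherent to the exterior geometry, as in \eqref{0101}: it reflects radiation at infinity and not the lower-order term.

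The heart of the proof is to transfer the dissipation, which a priori only controls $\partial_t u$ on $\omega_a$, to the undamped component $v$. Assumption $(\mathcal{A}_1)$ is used here: since $\operatorname{supp}(b)\subset\operatorname{supp}(a)$, the coupling source $b\,\partial_t u$ driving the second equation is pointwise dominated by a multiple of $\sqrt{a}\,|\partial_t u|$, hence controlled in space-time $L^2$ by the dissipated energy. I would then invoke the geometric control condition on $\omega_b$: because every generalized geodesic meets $\{b>0\}$, the free operator $\partial_t^2-\gamma^2\Delta+m_2$ governing $v$ is observable from $\omega_b$, so the control of the forcing term $b\,\partial_t u$ can be propagated into a control of the full energy of $v$, up to compact remainders localized near the obstacle. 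The escape of energy to infinity is ruled out by $(\mathcal{A}_2)$: on $B_{R_1}^c$ the damping and the coupling are effective (either $B_{R_1}^c\subset\omega_a\cap\omega_b$ when $m_2>0$, or $B_{R_1}^c\subset\omega_b$ with $a=\beta b$ in the pure-wave case $m_1=m_2=0$), so the only possible obstructions are the trapped rays inside $B_{R_1}$, precisely the region handled by \textbf{GCC}. This transfer is the step I expect to be the main obstacle. In particular, when $\gamma\neq 1$ the two equations have different principal symbols and their bicharacteristic flows cannot be compared directly; this is what forces the higher-regularity space $D(\mathcal{A})$ and explains the weighted term $(1-\tfrac{1}{\gamma^2})^2 E_{\partial_t u,\partial_t v}(0)$ in $I_\gamma^2$, obtained by applying the estimate to the differentiated solution $\mathcal{A}\mathcal{U}$.

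Granting the observability inequality, I would close the argument as follows. Combined with \eqref{decroissance} it yields a difference inequality of the form $E_{u,v}(S+T)\leqslant C\big(E_{u,v}(S)-E_{u,v}(S+T)\big)+C\,N(S)$, where $N(S)=\int_S^{S+T}\|(u,v)\|_{L^2(B_{R_1})}^2\,dt$. To handle $N(S)$ I would use the equipartition multiplier $\int_\Omega(u\,\partial_t u+v\,\partial_t v)\,dx$ together with the boundedness of $\|(u,v)(t)\|_{L^2}$; when $m_2>0$ the $L^2$-norm of $v$ already sits inside the energy, so only the genuinely delicate case $m_1=m_2=0$ requires a separate $L^2$ bound, which is where the term $\|(u,v)(0)\|_{L^2}^2$ in $I_\gamma^2$ enters. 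Feeding these bounds into a weighted-energy (Nakao) iteration over the intervals $[nT,(n+1)T]$ produces the polynomial rate $E_{u,v}(t)\leqslant C(1+t)^{-1}I_\gamma^2$, and the same multiplier estimates together with interpolation give $\|(u,v)(t)\|_{L^2}^2\leqslant C I_\gamma^2$.

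Finally, for arbitrary data in $\mathcal{H}$ I would obtain $E_{u,v}(t)\to 0$ by density. The smooth compactly supported data $(C_0^\infty(\Omega))^4$ are contained in $\mathcal{H}_\gamma$ and are dense in $\mathcal{H}$ by its very definition; since the flow is a contraction semigroup, $E_{u,v}$ is non-increasing and uniformly bounded by the initial energy, while the uniform decay just proved holds on the dense subspace. A standard $\varepsilon/2$ approximation then gives $E_{u,v}(t)\to 0$ for every datum in $\mathcal{H}$.
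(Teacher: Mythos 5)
Your overall architecture --- multipliers at infinity exploiting $(\mathcal{A}_2)$, observability from $\omega_b$ in a bounded region, absorption of a compact lower-order term, and a rate extracted from monotonicity of the energy --- matches the paper's, and your reading of the roles of $(\mathcal{A}_1)$, of $D(\mathcal{A})$ when $\gamma\neq 1$, and of the density argument for the last assertion is correct. But the central quantitative inequality you propose is misstated in a way that matters. The estimate
\[
E_{u,v}(S+T)\;\leqslant\; C\int_S^{S+T}\!\!\int_\Omega a(x)|\partial_t u|^2\,dx\,dt\;+\;C\int_S^{S+T}\!\|(u,v)(t)\|_{L^2(B_{R_1})}^2\,dt,
\]
with constants uniform in $S$, is too strong: once the lower-order term is absorbed in the standard way (as in the paper's Proposition \ref{lemme3}, which bounds it by the dissipation plus $\alpha\int E_{u,v}$), it yields $E_{u,v}(S+T)\leqslant\theta\,E_{u,v}(S)$ with $\theta<1$, hence exponential decay on the energy space --- false already for a single damped wave in an exterior domain when $m_1=m_2=0$ (low-frequency obstruction); and if instead the lower-order term is only bounded by a constant via ``boundedness of $\|(u,v)(t)\|_{L^2}$'', no decay follows at all. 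The multiplier computations at infinity unavoidably produce boundary-in-time terms of the form $\|(u,v)(S)\|_{L^2(\Omega)}^2$ (and, for $\gamma\neq 1$, $E_{\partial_t u,\partial_t v}$), which are not dominated by $E_{u,v}(S)$. This is precisely why the paper proves instead a time-integrated bound $\int_0^{t}E_{u,v}(s)\,ds\lesssim I_\gamma^2$ (Propositions \ref{prop1}, \ref{prop2} combined in Section 5) and then concludes $(1+t)E_{u,v}(t)\leqslant E_{u,v}(0)+\int_0^{t}E_{u,v}(s)\,ds\lesssim I_\gamma^2$ from monotonicity --- no Nakao iteration, and no per-interval observability of the energy.

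The second, more serious gap is your treatment of $N(S)=\int_S^{S+T}\|(u,v)\|_{L^2(B_{R_1})}^2\,dt$. The equipartition multiplier $\int_\Omega(u\,\partial_t u+v\,\partial_t v)\,dx$ relates kinetic and potential energies but gives no control of a localized $L^2$ norm by the dissipation. This compact term is the crux of the proof and in the paper it is handled by a compactness--uniqueness argument (Proposition \ref{lemme3}): assume the estimate fails along a sequence, extract a weak limit $(y,z)$ solving the undamped system with $a\,\partial_t y=0$, deduce $b\,\partial_t^2 z=0$ from $(\mathcal{A}_1)$, then upgrade the regularity of $w=\partial_t z$ by propagation of singularities along the bicharacteristics of $\partial_t^2-\gamma^2\Delta$ (using \textbf{GCC} and $B_{R_1}^c\subset\omega_b$) before classical unique continuation can be applied to force $z\equiv 0$ and then $y\equiv 0$, contradicting the normalization. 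Nothing in your proposal substitutes for this step. Finally, you flag the ``transfer of dissipation to $v$'' as the main obstacle but give no argument for it; in the paper it is carried out concretely by multiplying the cut-off system by $b\,\partial_t v^i$ and $\tfrac{b}{\gamma^2}\partial_t u^i$ (Lemma \ref{lemme2}) and then invoking the scalar observability estimate (Lemma \ref{obs}) for each component, which is also where the $(1-\tfrac{1}{\gamma^2})^2|\partial_t^2 u|^2$ terms are generated.
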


In the case of  Klein-Gordon-type systems  we obtain the following uniform decay.
\begin{corollary}
\label{corol}
 Let $m_1, m_2 , \gamma \in \R_+^*$.  Assume that $\omega_b$ satisfies the \textbf{GCC} and the assumptions $(\mathcal{A}_1)$  and $(\mathcal{A}_2)$ hold.
\begin{itemize}
\item[$\triangleright$] If $\gamma =1$,  then there exist positive constants $C$ and $\alpha$ such that 
 \begin{equation}
\label{theorem 2}
E_{u,v}(t) \leqslant Ce^{-\alpha t} E_{u,v}(0) ,    \text{ for all }  t \geqslant 0, 
\end{equation}
for all solution  $(u,v)$ of the  system $\eqref{pb311}$ with initial data $(u_0,u_1, v_0,v_1) \in \mathcal{H}_1$. 
\item[$\triangleright$] If $\gamma \neq 1$, then there exists a positive constant $C$  such that 
 \begin{equation}
\label{theorem 22}
E_{u,v}(t) \leqslant \frac{C}{t^n} \sum_{k=0}^n  E_{\partial_t^k u,\partial_t^k v}(0) ,    \text{ for all }  t \geqslant 0, 
\end{equation}
for all solution  $(u,v)$ of the  system $\eqref{pb311}$ with initial data $(u_0,u_1, v_0,v_1) \in D(\mathcal{A}^n)$.

\end{itemize} 
  
\end{corollary}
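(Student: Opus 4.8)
The plan is to deduce both statements directly from the polynomial estimate \eqref{uv} of Theorem \ref{theoreme1}, exploiting the fact that in the Klein--Gordon regime $m_1,m_2>0$ the $L^2$-norm is subordinate to the energy. Indeed, from the definition of $E_{u,v}$ one reads off, for every solution and every $t\ge 0$,
\begin{equation*}
\|(u,v)(t)\|_{L^2(\Omega)}^2 \le \frac{2}{\min(m_1,m_2)}\,E_{u,v}(t).
\end{equation*}
Hence the last term of $I_\gamma^2$ is absorbed into $E_{u,v}(0)$, giving $I_\gamma^2\le C\big(E_{u,v}(0)+(1-\tfrac{1}{\gamma^2})^2 E_{\partial_t u,\partial_t v}(0)\big)$. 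Note also that $m_1,m_2>0$ forces $\mathcal H\subset (L^2(\Omega))^4$, so $\mathcal H_1=\mathcal H$.

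For $\gamma=1$ the coefficient $(1-\tfrac{1}{\gamma^2})^2$ vanishes, so $I_1^2\le C\,E_{u,v}(0)$ and \eqref{uv} becomes $E_{u,v}(t)\le C(1+t)^{-1}E_{u,v}(0)$ for all data in $\mathcal H_1=\mathcal H$. I would then upgrade this uniform polynomial decay to the exponential rate \eqref{theorem 2} by the classical semigroup argument: choose $T>0$ with $C(1+T)^{-1}\le\tfrac12$, so that $E_{u,v}(T)\le\tfrac12 E_{u,v}(0)$ for \emph{every} solution; since the system is autonomous, applying this to the solution issued from $\mathcal U(kT)$ yields $E_{u,v}((k+1)T)\le\tfrac12 E_{u,v}(kT)$, whence $E_{u,v}(nT)\le 2^{-n}E_{u,v}(0)$, and the monotonicity \eqref{decroissance} fills the gaps between the times $nT$ to give $E_{u,v}(t)\le Ce^{-\alpha t}E_{u,v}(0)$ with $\alpha=T^{-1}\ln 2$.

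For $\gamma\ne 1$ the residual term $E_{\partial_t u,\partial_t v}(0)$ survives, which is exactly why one more order of regularity should buy one more power of $t$. Because the system is linear with time-independent coefficients, each derivative $(\partial_t^k u,\partial_t^k v)$ is again a solution of \eqref{pb311}, with state $(-\mathcal A)^k\mathcal U$; writing $e_k(t)=E_{\partial_t^k u,\partial_t^k v}(t)$, every $e_k$ is nonincreasing and, applying \eqref{uv} together with the $L^2$-absorption to this solution, satisfies the order-one estimate
\begin{equation*}
e_k(t)\le \frac{C}{1+t}\big(e_k(0)+e_{k+1}(0)\big),\qquad t\ge 0,
\end{equation*}
valid as soon as $\mathcal U_0\in D(\mathcal A^{k+1})$. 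I would then prove by induction on $n$ that $e_0(t)\le C_n(1+t)^{-n}\sum_{k=0}^n e_k(0)$: splitting $[0,t]$ at $t/2$, the semigroup property gives $e_0(t)\le \frac{C}{1+t/2}\big(e_0(t/2)+e_1(t/2)\big)$, and the induction hypothesis at order $n-1$, applied to the two solutions $(u,v)$ and $(\partial_t u,\partial_t v)$, bounds $e_0(t/2)$ and $e_1(t/2)$ by $(1+t/2)^{-(n-1)}$ times $\sum_{k=0}^{n-1}e_k(0)$ and $\sum_{k=1}^{n}e_k(0)$ respectively; using $1+t/2\ge\tfrac12(1+t)$ collapses the product into the claimed bound, which is \eqref{theorem 22} since $(1+t)^{-n}\le t^{-n}$.

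The routine parts are the semigroup and monotonicity bookkeeping and checking that only $D(\mathcal A^n)$ is needed: the estimate for $e_{n-1}$ already involves $e_n(0)=\tfrac12\|(-\mathcal A)^n\mathcal U_0\|_{\mathcal H}^2$, but $e_n$ is never differentiated again, so no $D(\mathcal A^{n+1})$ appears. The genuine analytic content is entirely imported from Theorem \ref{theoreme1}; the step I expect to require the most care is the induction in the case $\gamma\ne 1$, namely organizing the hierarchy $(e_k)_k$ so that the extra energy $E_{\partial_t u,\partial_t v}(0)$ generated at each use of \eqref{uv} is reabsorbed into the next level instead of accumulating, and verifying that the constants $C_n$ remain finite through the recursion.
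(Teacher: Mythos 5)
Your proof is correct and follows essentially the same route as the paper: both cases rest on the decay estimate \eqref{uv} of Theorem \ref{theoreme1} (in the form \eqref{4.12}) together with the observation that $m_1,m_2>0$ lets the $L^2$-term of $I_\gamma^2$ be absorbed into $E_{u,v}(0)$, and for $\gamma=1$ the paper uses exactly your semigroup iteration, combined with the monotonicity \eqref{decroissance}, to pass from $C/t$ decay to exponential decay. The only divergence is in the case $\gamma\neq 1$: where you run the induction on the hierarchy $e_k(t)\le \frac{C}{1+t}\bigl(e_k(0)+e_{k+1}(0)\bigr)$ by hand (splitting at $t/2$ and using that $\partial_t$ commutes with the autonomous system), the paper simply invokes the abstract iteration theorem of Alabau et al.\ \cite{e1} (Theorem 2.1), whose proof is precisely the argument you wrote out; your version is self-contained but identical in substance, and your bookkeeping of the required regularity ($D(\mathcal{A}^n)$ suffices because $e_n$ is never differentiated again) is accurate.
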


\begin{remark}
\begin{itemize}
\item[$\bullet$] To our best knowledge, our result is new for the indirect stabilization problem in exterior domains.  
\item[$\bullet$]  Remark that, when $\gamma =1$, the energy of the system $\eqref{pb311}$ decays as fast as that of the corresponding scalar damped equation.  So the coupling through velocities, in this case,  allows a full transmission of the damping effects, quite different from the coupling through the displacements.
\item[$\bullet$]  To prove our main result we study the energy first at infinity ( Section 2) and then in bounded regions (Section 3). Keeping, only the second step, we can obtain the expnential energy decay for the system $\eqref{pb311}$ in bounded domains with Dirichly boundary condition.
\item[$\bullet$]  Due to technical  difficulties  we did not cover the  Klein-Gordon-Wave case ($m_1>0$, $m_2=0$); we will be interested in the forthcoming work.
\end{itemize} 
\end{remark} 
We conclude this introduction with an outline of the rest of this paper. In Section 2 we estimate the total energy  at infinity by multiplier  arguments.  Section 3 is devoted to the study of the energy in bounded domain. The proof of this result is based on observability estimate for scalar wave equation. In order to control the compact terms, we prove in section 4 a weak observability estimate that is based on a unique continuation result. Finally, in Section 5 we combine the results of the previous sections to established our main results.
 \\

We denote by $ \Omega_R:= \Omega \cap B_R$ , $C_{R,R'}= \Omega \cap (B_{R'}\backslash B_R),$ when $0< R < R'$ ,
\begin{align*}
E^R(u,v,t) &=\frac{1}{2}\int_{|x|> R} |\partial_t u(t,x)|^2 +|\nabla u(t,x)|^2 +m_1 |u(t,x)|^2\ dx \\&+\frac{1}{2}\int_{|x|> R} |\partial_t v(t,x)|^2 +\gamma^2|\nabla v(t,x)|^2  +m_2 |v(t,x)|\ dx, 
\end{align*}
\begin{align*}
E_R(u,v,t) &=\frac{1}{2}\int_{\Omega_R} |\partial_t u(t,x)|^2 +|\nabla u(t,x)|^2 +m_1 |u(t,x)|^2\ dx \\&
+\frac{1}{2}\int_{\Omega_R} |\partial_t v(t,x)|^2 +\gamma^2|\nabla v(t,x)|^2  +m_2 |v(t,x)|\ dx, 
\end{align*}
and  $ A \lesssim B$ means $A \leqslant CB$ for some positive constante $C$.
\section{Estimate of energy near infinity}
The main result of this section is as follows. 
\begin{proposition}
\label{prop1}
Let $\gamma \in  \R_+^*$ and  $(m_1,m_2) \in  \lbrace (0,0)\rbrace  \cup \R_+\times \R_+^* $. Let $R_1>0$ be  such that $(\mathcal{A}_2)$ is satisfied and $R_2>R_1$. Then for every $\varepsilon >0$, there exists $C_\varepsilon >0$ such that for all  solution $(u,v)$  of $\eqref{pb311}$ with initial data $(u_0,u_1,v_0,v_1)\in \mathcal{H}_\gamma$, we have 
 \begin{align*}
 \|(u,v)(t)\|_{L^2(|x|>R_2)}^2 + \int_0^{t}E^{R_2}(u,v,s) ds  \lesssim  C_\varepsilon( E_{u,v}(0)+(1-\frac{1}{\gamma^2})^2 E_{\partial_t u, \partial_t v}(0))
 \end{align*}
 \begin{equation}
 \label{enex1g}
 +\varepsilon  \int_0^{t}  E_{u,v}(s) \ ds +  C_\varepsilon\int_0^{t}\int_{\Omega_{R_2}} |u|^2+|v|^2  \ dx ds + \|(u,v)(0)\|_{L^2(\Omega)}^2,
\end{equation}
for all $ t > 0$. 
\end{proposition}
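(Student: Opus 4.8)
The goal here is to prove that the energy of the coupled system, restricted to the exterior region $\{|x| > R_2\}$, together with its time-integral, is controlled by the initial energy plus a small fraction of the total energy and compactly-supported lower-order terms. This is a classical multiplier-method estimate adapted to the strong (velocity) coupling structure.

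The plan is to work with cut-off multipliers supported near infinity. I would fix a smooth radial cut-off $\varphi$ equal to $1$ on $\{|x| > R_2\}$ and vanishing on $B_{R_1}$, so that on the support of $\nabla \varphi$ (contained in the annulus $C_{R_1,R_2}$) all error terms are localized in a compact region and will feed into the $\int\int_{\Omega_{R_2}} |u|^2 + |v|^2$ term. First I would multiply the $u$-equation by $\varphi \partial_t u$ and the $v$-equation by $\varphi \partial_t v$, integrate over $\Omega \times (0,t)$, and integrate by parts. The key algebraic feature is that the two coupling terms $b(x)\partial_t v \cdot \partial_t u$ and $-b(x)\partial_t u \cdot \partial_t v$ cancel when one adds the two identities (this is precisely the skew-symmetry that makes the velocity coupling conservative), so the only genuinely dissipative/driving term that survives on $\{|x|>R_2\}$ comes from $a(x)|\partial_t u|^2$. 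Under $(\mathcal{A}_2)$, on $B_{R_1}^c$ one has $a \geq a^- > 0$ (in the Klein-Gordon case) or the proportionality $a = \beta b$ (in the wave case), which lets me absorb and compare the damping and coupling contributions at infinity.

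Next I would deploy a second multiplier of the form $\varphi (x \cdot \nabla u + \frac{d-1}{2} u)$ — the standard Morawetz/radial multiplier — against the $u$-equation, and the analogous one against the $v$-equation, in order to generate the gradient and potential energy terms $\int |\nabla u|^2$, $m_1 \int |u|^2$, etc., that the energy multiplier alone does not produce at infinity. The combination of the two multiplier identities yields control of $\int_0^t E^{R_2}(u,v,s)\,ds$, with boundary and commutator terms again localized on the annulus $C_{R_1,R_2}$. For the $L^2$-norm estimate $\|(u,v)(t)\|^2_{L^2(|x|>R_2)}$, I would multiply the equations by $\varphi u$ and $\varphi v$ respectively; here the coupling term $b \partial_t v \cdot u$ does \emph{not} cancel cleanly, so I would estimate it via Young's inequality, splitting it so that the $\varepsilon$-small part is absorbed into $\varepsilon \int_0^t E_{u,v}$ and the $C_\varepsilon$ part into either the energy at infinity already controlled or the compact $L^2$ term.

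The main obstacle I anticipate is the discrepancy between the two wave speeds when $\gamma \neq 1$. The radial multiplier for $u$ propagates at speed $1$ while that for $v$ propagates at speed $\gamma$, so the two Morawetz identities do not combine with a single cut-off, and cross terms of the form $(1 - \frac{1}{\gamma^2})$ appear precisely where the paper's quantity $(1-\frac{1}{\gamma^2})^2 E_{\partial_t u, \partial_t v}(0)$ enters. To handle this I expect to differentiate the system in time and run the multiplier argument on $(\partial_t u, \partial_t v)$ as well, so that the speed-mismatch error in the lower-order identity is paid for by one extra derivative of energy, controlled by $E_{\partial_t u, \partial_t v}$; this is why the hypothesis requires $(u_0,u_1,v_0,v_1) \in D(\mathcal{A}) \cap (L^2)^4$ in that case. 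Careful bookkeeping of the annular boundary terms, ensuring every non-sign-definite contribution is either absorbed into $\varepsilon \int_0^t E_{u,v}$ or deposited into the compactly supported $\int_0^t \int_{\Omega_{R_2}} |u|^2 + |v|^2$, is the delicate part, and the proportionality $a = \beta b$ in the pure-wave case is exactly what closes the argument there by tying the coupling loss to the available damping.
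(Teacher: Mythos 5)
Your plan misses the central mechanism of the paper's proof and, as written, cannot be closed. By pairing the first equation with $\varphi\,\partial_t u$ and the second with $\varphi\,\partial_t v$ you arrange for the coupling terms to cancel; but that cancellation is exactly what you must avoid, because the $v$-equation carries no damping of its own and the only route by which dissipation reaches $v$ is the coupling term $b(x)\partial_t v$. The paper instead uses the \emph{cross} multipliers $\varphi\,\partial_t v$ against the $u$-equation and $\frac{1}{\gamma^2}\varphi\,\partial_t u$ against the $v$-equation (Lemma 2.1): summing these identities produces the indirect damping term $\int_0^t\int_\Omega b\varphi|\partial_t v|^2$ on the left-hand side, controlled by $\int a|\partial_t u|^2$, by $(1-\frac{1}{\gamma^2})^2\int a|\partial_t^2 u|^2$ (which is where $E_{\partial_t u,\partial_t v}(0)$ enters, via the dissipation identity applied to the time-differentiated system), and by compactly supported lower-order terms, using crucially that $\mathrm{supp}(\varphi)\subset\{a>a^-\}$. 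Without this step you have no control of $\int\varphi|\partial_t v|^2$ at any point of your argument; in particular the coupling term $b\,\partial_t v\cdot u$ arising from your $\varphi u$ multiplier cannot be estimated, since its Young-inequality counterpart $C_\varepsilon\varphi|\partial_t v|^2$ is precisely the quantity you have no handle on (and when $m_1=0$ the companion term $\varepsilon\varphi|u|^2$ is not part of the energy at infinity either, so it cannot be absorbed into $\varepsilon\int_0^t E_{u,v}$).

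A second, more minor issue: the radial Morawetz multiplier $\varphi(x\cdot\nabla u+\frac{d-1}{2}u)$ has an unbounded coefficient at infinity, and the error terms it generates against $a(x)\partial_t u$ and $b(x)\partial_t v$ are not controlled by the energy. The paper obtains the gradient and potential terms at infinity from the bounded multipliers $\varphi u$ and $\varphi v$ (equipartition), combined in the case $m_1=m_2=0$ with an exact substitution of the second equation into the coupling term, namely $\int\varphi b\,v\,\partial_t u=\int\varphi v(\partial_t^2 v-\gamma^2\Delta v)$, and in the Klein--Gordon case with the fact that $m_2|v|^2$ is itself part of the energy. Your intuition about the origin of the $(1-\frac{1}{\gamma^2})^2E_{\partial_t u,\partial_t v}(0)$ term is essentially correct, but that contribution is harvested through the cross-multiplier identity, not through a Morawetz argument on the differentiated system.
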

 
Let $  \varphi  \in C^\infty (\R^d)$ be a function  satisfying $ 0  \leqslant  \varphi \leqslant 1 $ and 
$$ \varphi(x) =\begin{cases} 1 & \text{ for  } |x| \geqslant R_2  \\ 0 & \text{ for }  |x| \leqslant R_1. \end{cases} \ $$

To prove Proposition $\ref{prop1}$, we need the following Lemma.
\begin{lemma}
\label{lemme1}
 We assume the hypothesis of Proposition \ref{prop1} and  we consider $\varphi$ as above. Then for every $\varepsilon >0$, there exist $C_\varepsilon >0$ such that for all  solution $(u,v)$  of $\eqref{pb311}$ with initial data $(u_0,u_1,v_0,v_1)\in \mathcal{H}_\gamma$, we have 
\begin{align*}
\int_0^{t}\int_\Omega b(x)&\varphi|\partial_t v|^2 \ dx ds  \lesssim C_\varepsilon( E_{u,v}(0)+(1-\frac{1}{\gamma^2})^2 E_{\partial_t u, \partial_t v}(0))
\end{align*}
\begin{equation}
\label{Lemg}
+C_\varepsilon \int_0^{t}\int_{\Omega_{R_2}}|v|^2 \ dxds + \varepsilon \int_0^{t}E_{u,v}(s)\  ds   ,
\end{equation}
for all $t>0$. 
\end{lemma}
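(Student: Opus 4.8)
The plan is to test the first equation of \eqref{pb311} against the multiplier $\varphi\,\partial_t v$ and integrate over $(0,t)\times\Omega$. Since the coupling term $b(x)\partial_t v$ sits in that equation, this isolates the desired quantity on one side and moves everything else to the other:
\[
\int_0^t\!\!\int_\Omega b\varphi|\partial_t v|^2 = -\int_0^t\!\!\int_\Omega \varphi\,\partial_t v\,\partial_t^2 u + \int_0^t\!\!\int_\Omega \varphi\,\partial_t v\,\Delta u - m_1\int_0^t\!\!\int_\Omega \varphi\,u\,\partial_t v - \int_0^t\!\!\int_\Omega a\varphi\,\partial_t u\,\partial_t v .
\]
The guiding principle is that, by $(\mathcal{A}_2)$, $\mathrm{supp}\,\varphi\subset B_{R_1}^c$ where $a\ge a^->0$; hence the dissipation identity \eqref{decroissance} upgrades to genuine space-time control $\int_0^t\int_{|x|\ge R_1}|\partial_t u|^2\lesssim E_{u,v}(0)$, \emph{with no factor of $t$}. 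This is the mechanism that makes the time integrals close. On the same set $a$ and $b$ are comparable (or $a=\beta b$ in the massless case), so any occurrence of $a\varphi|\partial_t v|^2$ or $b\varphi|\partial_t u|^2$ may be traded for $b\varphi|\partial_t v|^2$ or $a|\partial_t u|^2$.

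The last integral is handled by Young's inequality: the factor $\partial_t u$ is absorbed by the dissipation, while the remaining $\tfrac{\delta}{2}a\varphi|\partial_t v|^2\lesssim \tfrac{\delta}{2}b\varphi|\partial_t v|^2$ is absorbed, for $\delta$ small, into the left-hand side. In the first integral I integrate by parts in time (the boundary contributions are controlled at fixed times by $E_{u,v}(0)$ since the energy is non-increasing) and substitute $\partial_t^2 v=\gamma^2\Delta v-m_2 v+b\partial_t u$ from the second equation. This produces a harmless term $\int b\varphi|\partial_t u|^2\lesssim E_{u,v}(0)$, a term $-m_2\int\varphi\,v\,\partial_t u$ which Young's inequality splits into $\varepsilon\int_0^t E_{u,v}$ (using $m_2|v|^2\le $ energy) plus a multiple of $\int_0^t\int_{|x|\ge R_1}|\partial_t u|^2\lesssim E_{u,v}(0)$, and the second-order term $\gamma^2\int\varphi\,\Delta v\,\partial_t u$. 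Integrating this last term and $\int\varphi\,\partial_t v\,\Delta u$ by parts in space and recombining through $\partial_t(\nabla u\cdot\nabla v)$ yields a boundary-in-time term $\lesssim E_{u,v}(0)$, commutator terms carrying $\nabla\varphi$ and supported in the annulus $R_1\le |x|\le R_2\subset\Omega_{R_2}$ (contributing $C_\varepsilon\int_0^t\int_{\Omega_{R_2}}|v|^2+\varepsilon\int_0^t E_{u,v}$), and the speed-mismatch cross term $(1-\gamma^2)\int_0^t\int_\Omega \varphi\,\nabla\partial_t u\cdot\nabla v$. The term $-m_1\int\varphi\,u\,\partial_t v$ is present only when $m_1>0$, in which case $m_2>0$ as well, and is treated exactly like the $-m_2\int\varphi\,v\,\partial_t u$ term after one integration by parts in time.

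The main obstacle is the cross term $(1-\gamma^2)\int_0^t\int_\Omega \varphi\,\nabla\partial_t u\cdot\nabla v$. When $\gamma=1$ it vanishes, which is precisely why no extra regularity is required in that case. When $\gamma\ne1$ it carries one derivative too many, and a naive application of Young's inequality produces a time integral of $|\nabla\partial_t u|^2$ or $|\partial_t^2 v|^2$ that cannot be bounded without an unwanted factor of $t$. The remedy is to reduce it, using \emph{both} equations, to a term in which the damped velocity appears differentiated: integrating by parts in space, replacing $\Delta v$ through the second equation, integrating by parts in time, and replacing $\partial_t^2 u$ through the first equation turns it (modulo boundary terms $\lesssim E_{u,v}(0)$ and already-treated lower-order terms) into a multiple of $(1-\tfrac{1}{\gamma^2})\int_0^t\int_\Omega \varphi\,\partial_t^2 u\,\partial_t v$. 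Now $\partial_t^2 u$ is the velocity of the damped component of the differentiated pair $(\partial_t u,\partial_t v)$, which solves the same system; its dissipation identity together with $a\ge a^-$ on $\mathrm{supp}\,\varphi$ gives $\int_0^t\int_{|x|\ge R_1}|\partial_t^2 u|^2\lesssim E_{\partial_t u,\partial_t v}(0)$, once more with no factor of $t$. Young's inequality then yields exactly the term $C_\varepsilon(1-\tfrac{1}{\gamma^2})^2 E_{\partial_t u,\partial_t v}(0)$, together with a small multiple of $\int_0^t\int b\varphi|\partial_t v|^2$ absorbed on the left. This is also the structural reason for setting $\mathcal{H}_\gamma=D(\mathcal{A})\cap(L^2(\Omega))^4$ when $\gamma\ne1$: the argument genuinely consumes the finiteness of $E_{\partial_t u,\partial_t v}(0)$. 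Collecting all contributions and absorbing the small multiples of the left-hand side gives \eqref{Lemg}.
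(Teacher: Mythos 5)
Your proposal is correct and follows essentially the same route as the paper: the paper tests the first equation against $\varphi\,\partial_t v$ and the second against $\frac{1}{\gamma^2}\varphi\,\partial_t u$ and sums, which cancels the principal second-order spatial terms in one step and leaves exactly the cross term $(1-\frac{1}{\gamma^2})\int\varphi\,\partial_t^2 u\,\partial_t v$ that you eventually reach, handled identically via Young's inequality and the dissipation identity for the differentiated solution $(\partial_t u,\partial_t v)$ on $\mathrm{supp}\,\varphi\subset\{a>a^-\}$ (hence the $D(\mathcal{A})$ requirement when $\gamma\neq 1$). The only difference is bookkeeping: by substituting $\partial_t^2 v$ from the second equation with weight $1$ rather than $\frac{1}{\gamma^2}$ you create the gradient mismatch $(1-\gamma^2)\int\varphi\,\nabla\partial_t u\cdot\nabla v$ and need an extra round of integration by parts to reduce it, which the paper's weighted sum avoids.
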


\begin{proof}[Proof of Lemma $\ref{lemme1}$]
Multiplying the first and the second equation of $\eqref{pb311}$ respectively by $ \varphi \partial_t v$ and $ \frac{1}{\gamma ^2}\varphi \partial_t u$ and integrating the sum of these results on $[0,t]\times \Omega$, we obtain
\begin{align*}
\Big[\int_\Omega & \frac{1}{\gamma ^2}\varphi \partial _tu  \partial_t v  +m_1 \varphi u v \ dx \Big]_0^{t} + \int_0^{t}\int_\Omega b(x) \varphi |\partial_t v|^2\ dx ds \\&= \int_0^{t}\int_\Omega \frac{1}{\gamma ^2} a(x) \varphi |\partial_t u|^2  - \varphi  \partial_t u \partial_t v  + \varphi \Delta u \partial_t v \\&  +(m_1-\frac{m_2}{\gamma ^2}) \varphi v\partial_t u + \varphi \Delta v \partial_t u    - (1-\frac{1}{\gamma ^2}) \varphi\partial_t v \partial_t^2 u\ dx ds.   
\end{align*}
Note that
\begin{align*}
\int_0^{t}\int_\Omega \varphi \Delta u \partial_t v& \ dx ds = \Big[\int_\Omega  \varphi  \Delta u v \ dx \Big]_0^{t} -\int_0^{t}\int_\Omega \varphi \Delta \partial_t u v \ dx ds \\& = -\Big[\int_\Omega    \nabla u(\nabla \varphi v + \varphi \nabla v)\ dx \Big]_0^{t} -\int_0^{t}\int_\Omega \Delta (\varphi v) \partial_t u  \ dx ds  \\&=  -\int_0^{t}\int_\Omega( \Delta \varphi v + \Delta v \varphi + 2 \nabla v \nabla \varphi ) \partial_t u  \ dx ds 
\end{align*}
\begin{equation}
\label{cro}
-\Big[\int_\Omega    \nabla u(\nabla \varphi v + \varphi \nabla v)\ dx \Big]_0^{t}.
\end{equation}
Then using  Young's inequality, we get
\begin{align*}
\Big[F_\gamma \Big]_0^{t} + \int_0^{t}\int_\Omega b(x)\varphi|\partial_t v|^2\ dx ds & \lesssim \int_0^{t}\int_\Omega   ((\frac{1}{\gamma^2} a(x)+2)\varphi + C_\varepsilon|\nabla \varphi|^2) |\partial_t u|^2 \\&+ C_\varepsilon \varphi (1-\frac{1}{\gamma^2})^2 |\partial_t^2 u|^2 + |\Delta \varphi|^2|v|^2 \ dx ds \\& + \varepsilon \int_0^{t}\int_\Omega |\nabla v|^2 +(m_1-\frac{m_2}{\gamma ^2})^2\|\varphi\|_\infty |v|^2\\& + |\partial_t u|^2+ \|\varphi\|_\infty|\partial_t v |^2 \ dx ds,
\end{align*}
where
\begin{align*}
F_\gamma=\int_\Omega \varphi(\frac{1}{\gamma ^2} \partial _t u  \partial_t v  +m_1 uv) +  \nabla u(\nabla \varphi v + \varphi \nabla v)\ dx.
\end{align*}
By hypothesis
\begin{align}
\label{2.5}
 supp(\varphi) \subset  \{x\in \Omega, a(x) >a^-\},
\end{align}
so, we deduce that
 \begin{align*}
  \Big[F_\gamma \Big]_0^{t} + \int_0^{t}\int_\Omega b(x)\varphi|\partial_t v|^2\ dx ds  \lesssim C_\varepsilon\int_0^{t}\int_\Omega  a(x) ( |\partial_t u|^2 
 \end{align*}
 \begin{equation}
 \label{*0}
+  (1-\frac{1}{\gamma^2})^2 |\partial_t^2 u|^2) \ dx ds + \int_0^t\int_{\Omega_{R_2}} |v|^2 \ dx ds + \varepsilon \int_0^t E_{u,v}(s) \ ds.
 \end{equation}
 Using  the energy decay  $\eqref{decroissance}$ and the fact that $(m_1,m_2) \in\{(0,0)\}\cup \R_+\times \R_+^*$, we can see that
 \begin{align}
&\label{1} \Big|F_\gamma (s)\Big| \lesssim E_{u,v}(s) \lesssim E_{u,v}(0),\quad \forall \ s\geqslant 0.
 \end{align}
Combining $\eqref{decroissance}$, $\eqref{*0}$ and $\eqref{1}$, we obtain $\eqref{Lemg}$. 
 
\end{proof}
\begin{lemma}
\label{lemme}
Let $\gamma \in  \R_+^*$ and $(m_1,m_2)=(0,0)$.  Let $R_1>0$ be  such that $(\mathcal{A}_2)$ is satisfied and $R_2>R_1$. Then for every $\varepsilon >0$, there exists $C_\varepsilon >0$ such that for all  solution $(u,v)$  of $\eqref{pb311}$ with initial data $(u_0,u_1,v_0,v_1)\in \mathcal{H}_\gamma$, we have 
\begin{align*}
  \|(u,v)(t)\|_{L^2(|x|>R_2)}^2 + \int_0^{t}E^{R_2}(v,s)   ds  \lesssim C_\varepsilon(  E_{u,v}(0)+(1-\frac{1}{\gamma^2})^2 E_{\partial_t u, \partial_t v}(0))
 \end{align*}
 \begin{equation}
 \label{envv}
 +\varepsilon  \int_0^{t}  E_{u,v}(s) \ ds + C_\varepsilon (\int_0^{t}\int_{\Omega_{R_2}} |u|^2+|v|^2  \ dx ds + \|(u,v)(0)\|_{L^2(\Omega)}^2),
\end{equation}
for all $t >0$.
\
 Where $ E^{R_2}(v,t)  =\frac{1}{2}\int_{|x|>R_2} |\partial_t v(t,x)|^2+ |\nabla v(t,x)|^2 \ dx.$
\end{lemma}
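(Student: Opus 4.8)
The plan is to prove Lemma~\ref{lemme} by a multiplier argument analogous to the one used in Lemma~\ref{lemme1}, but now exploiting the special structure of the problem when $(m_1,m_2)=(0,0)$. In this massless case the second equation reads $\partial_t^2 v-\gamma^2\Delta v=b(x)\partial_t u$, so $v$ is genuinely a damped wave quantity once we know $\partial_t v$ is controlled. The key observation is that assumption $(\mathcal{A}_2)$ in the case $m_1=m_2=0$ gives $B_{R_1}^c\subset\omega_b$ together with the proportionality $a(x)=\beta b(x)$ for $|x|\geqslant R_1$. This proportionality is precisely what lets us convert the damping term $a(x)|\partial_t u|^2$, which is all the dissipation \eqref{decroissance} provides, into control of $b(x)\varphi|\partial_t v|^2$ via Lemma~\ref{lemme1}, and then into control of $|\partial_t v|^2$ on $|x|>R_2$ since $b(x)\geqslant b^->0$ there.

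First I would fix the cutoff $\varphi$ as above and test the second equation of \eqref{pb311} with a suitable multiplier designed to produce the full energy density of $v$ at infinity. Concretely, I expect to multiply $\partial_t^2 v-\gamma^2\Delta v-b(x)\partial_t u=0$ by $\varphi\,\partial_t v$ and integrate over $[0,t]\times\Omega$; integrating by parts in $x$ the Laplacian term produces, after differentiating the energy density, a boundary-free identity of the shape
\begin{equation*}
\Big[\tfrac{1}{2}\int_\Omega\varphi(|\partial_t v|^2+\gamma^2|\nabla v|^2)\,dx\Big]_0^{t}
+\int_0^t\!\!\int_\Omega \gamma^2\nabla\varphi\cdot\nabla v\,\partial_t v\,dx\,ds
=\int_0^t\!\!\int_\Omega \varphi\,b(x)\,\partial_t u\,\partial_t v\,dx\,ds.
\end{equation*}
The left-hand bracket controls $E^{R_2}(v,\cdot)$ because $\varphi\equiv1$ on $|x|>R_2$, while the cross term $\nabla\varphi\cdot\nabla v\,\partial_t v$ is supported in the annulus $C_{R_1,R_2}$ and will be absorbed by $\varepsilon$ and the lower-order interior terms. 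The forcing term $\int\varphi\,b\,\partial_t u\,\partial_t v$ is estimated by Young's inequality as $\lesssim C_\varepsilon\int b|\partial_t u|^2+\varepsilon\int b\varphi|\partial_t v|^2$, and here I would invoke Lemma~\ref{lemme1} to dominate $\int_0^t\!\int_\Omega b\varphi|\partial_t v|^2$ by the right-hand side of \eqref{Lemg}; the term $\int b|\partial_t u|^2$ is absorbed directly by the dissipation \eqref{decroissance} since $b\lesssim a$ on $\mathrm{supp}(\varphi)$ by $(\mathcal{A}_1)$–$(\mathcal{A}_2)$.

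To obtain the $L^2$-part $\|(u,v)(t)\|_{L^2(|x|>R_2)}^2$ and the $\int_0^t E^{R_2}(v)$ gradient estimate rather than just the time-averaged kinetic energy, I would add a second multiplier step of Morawetz type, testing against $\varphi\,v$ (respectively $\varphi\,u$) to recover $\int\varphi|\nabla v|^2$ and the pointwise-in-time $L^2$ norms, again throwing the annular error terms into $\varepsilon\int_0^t E_{u,v}$ and the interior integral $\int_0^t\!\int_{\Omega_{R_2}}(|u|^2+|v|^2)$. The solution $u$ enters only through its velocity $\partial_t u$ inside the coupling, and the $L^2(|x|>R_2)$ control of $u$ itself follows by combining the kinetic control with a Poincaré-type or multiplier argument at infinity; this is where I would use that the far-field behaves like a free damped wave. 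The main obstacle I anticipate is the handling of the cross term $\int\varphi\,b\,\partial_t u\,\partial_t v$ and, more subtly, closing the estimate for $v$ without any mass term $m_2$: unlike Lemma~\ref{lemme1}, there is no $|v|^2$ contribution to the energy to lean on, so all control of $v$ at infinity must be routed through $\partial_t v$ and the elliptic structure of $-\gamma^2\Delta v$, making the proportionality $a=\beta b$ in $(\mathcal{A}_2)$ essential to transfer dissipation from $u$ to $v$ without losing a power of the energy.
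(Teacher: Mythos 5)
Your proposal correctly identifies the central difficulty --- with $m_1=m_2=0$ the energy contains no $L^2$ terms, so every space--time integral of $|u|^2$ or $|v|^2$ at infinity is a priori uncontrolled --- but the multipliers you choose do not overcome it, and you stop exactly at the point where the real work begins. Testing the second equation against $\varphi\,\partial_t v$ only yields the time derivative of the localized $v$-energy, which after integration gives endpoint values already dominated by $E_{u,v}(0)$; it contributes nothing toward $\int_0^t E^{R_2}(v,s)\,ds$. Your Morawetz step, testing the $v$-equation against $\varphi v$, produces $\gamma^2\int_0^t\!\int\varphi|\nabla v|^2$ with the right sign but at the cost of the term $\int_0^t\!\int\varphi\,b\,v\,\partial_t u$. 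This term cannot be closed: Young's inequality leaves $\varepsilon\int_0^t\!\int\varphi|v|^2$ (not dominated by $\varepsilon\int_0^t E_{u,v}$ since $m_2=0$, nor by the interior integral $\int_0^t\!\int_{\Omega_{R_2}}|v|^2$), and integrating by parts in time leaves $\varepsilon\int_0^t\!\int\varphi|u|^2$ plus a bracket $[\int\varphi b\,uv]$ of the same size as the $L^2$ quantities you are trying to bound. Worse, if you try to treat $b\,\partial_t u$ by substituting the second equation itself, the identity collapses to a tautology, because you would be testing equation two against $\varphi v$ and then rewriting its own coupling term with equation two. The gradient and $L^2$ information must come from the \emph{first} equation, where the damping lives.

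The paper's mechanism, which is absent from your proposal, has two ingredients. First, the proportionality $a=\beta b$ on $|x|\geqslant R_1$ is used not merely to compare $b\lesssim a$ but to \emph{substitute} $a\partial_t u=\frac{a}{b}\,b\partial_t u=\beta(\partial_t^2 v-\gamma^2\Delta v)$ into the first equation, producing the rewritten system \eqref{sysex11}. Second, the multipliers are \emph{swapped}: $\varphi v$ is applied to the (modified) first equation and $\frac{1}{\gamma^2}\varphi u$ to the second. This swap makes each coupling term a perfect time derivative, $b\varphi\,\partial_t v\cdot v=\tfrac12\partial_t(\varphi b|v|^2)$ and $b\varphi\,\partial_t u\cdot u=\tfrac12\partial_t(\varphi b|u|^2)$, which is precisely how the pointwise-in-time norms $\|(u,v)(t)\|^2_{L^2(|x|>R_2)}$ appear on the good side (using $b\geqslant b^-$ on $\mathrm{supp}\,\varphi$); it makes the principal cross terms $\nabla u\cdot\nabla(\varphi v)$ and $\nabla v\cdot\nabla(\varphi u)$ cancel up to commutators with $\nabla\varphi$; and the substituted damping term $\beta(\partial_t^2 v-\gamma^2\Delta v)$ tested against $\varphi v$ yields $+\gamma^2\beta\int_0^t\!\int\varphi|\nabla v|^2$ with a definite sign, with the leftover $\beta\int_0^t\!\int\varphi|\partial_t v|^2$ absorbed via Lemma \ref{lemme1}. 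Without the substitution and the swap, your scheme leaves an unabsorbable far-field $L^2$ term, so the proof as proposed does not close.
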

\begin{proof}[Proof of Lemma $\ref{lemme}$]
 We write the system $\eqref{pb311}$ in the form 

\begin{equation}
\label{sysex11}
  \left\{
    \begin{aligned}
     &  \partial_t^2 u -  \Delta u+ \frac{a(x)}{b(x)} \partial_t^2 v -  \frac{a(x)}{b(x)} \gamma^2\Delta v +  b(x) \partial_t v =0   & \text{ in } \R_+ \times \Omega_{R_1^c},\\ &
  -\partial_t^2 v  +\gamma^2\Delta v +b(x) \partial_t u  = 0  & \text{ in } \R_+ \times \Omega_{R_1^c}.
      \end{aligned}
  \right.
\end{equation}
Multiplying the first equation of $\eqref{sysex11}$ by $  \varphi v $ and the second  one by $\frac{1}{\gamma ^2}\varphi u $ and integrating the sum of these results on $[0,t]\times \Omega$, we obtain
\begin{align*}
  \int_\Omega&\frac{\varphi b(x)}{2}(\frac{1}{\gamma^2}|u(t)|^2 + |v(t)|^2) \ dx +  \beta\int_0^{t} \int_\Omega \varphi(|\partial_t v|^2 + |\nabla v|^2) \ dx  ds  \\&  =  \int_0^{t}\int_\Omega 2\varphi \beta|\partial_t v|^2  + \frac{ \gamma ^2\beta\Delta\varphi }{2}|v|^2- \nabla u( \nabla \varphi v +  \varphi\nabla v) \\&+ \nabla v(\nabla \varphi u + \varphi \nabla u)   +(1-\frac{1}{\gamma ^2})\varphi \partial_t u \partial_t v \ dx ds \\&+ \int_\Omega \frac{\varphi b(x)}{2}(\frac{1}{\gamma^2}|u(0)|^2 + |v(0)|^2) \ dx - \Big[G_\gamma \Big]_0^{t}  \ dx,
\end{align*}
where 
\begin{align*}
G_\gamma= \int_\Omega \varphi(\partial_t u v + \partial_t v v -\frac{1}{\gamma^2}\partial_t v u )\ dx .
\end{align*}
According to Lemma $\ref{lemme1}$, hypothesis $(\mathcal{A}_2)$ and using Young's inequality, we deduce that
\begin{align*}
\int_\Omega& \varphi (|u(t)|^2 + |v(t)|^2) \ dx + \int_0^{t}  \int_\Omega \varphi(|\partial_t v|^2 + |\nabla v|^2) \ dx   ds \\&\lesssim E_{u,v}(0) + (1-\frac{1}{\gamma^2})^2 E_{\partial_t u,\partial_t v}(0) +\|(u,v)(0)\|_{L^2}^2
\end{align*}
\begin{equation}
+ \int_0^{t}\int_{\Omega_{R_2}}|v|^2+ |u|^2\ dx ds  + \varepsilon  \int_0^{t} E_{u,v}(s) \ ds -\Big[G_\gamma\Big]_0^t.
\end{equation}
But we have
\begin{align*}
\Big|G_\gamma(t)\Big| &\lesssim E_{u,v}(t) + \varepsilon_1 \int_\Omega \varphi (|u(t)|^2 + |v(t)|^2) \ dx \\& \lesssim  E_{u,v}(0) + \varepsilon_1 \int_\Omega \varphi( |u(t)|^2 + |v(t)|^2) \ dx.
\end{align*}
So, for $\varepsilon_1$ small enough we get 
\begin{align*}
\int_\Omega& \varphi (|u(t)|^2 + |v(t)|^2) \ dx + \int_0^{t}  \int_\Omega \varphi(|\partial_t v|^2 + |\nabla v|^2) \ dx  \  ds \\&\lesssim E_{u,v}(0) + (1-\frac{1}{\gamma^2})^2 E_{\partial_t u,\partial_t v}(0) +\|(u,v)(0)\|_{L^2}^2
\end{align*}
\begin{equation}
\label{vgamma}
+ \int_0^{t}\int_{\Omega_{R_2}}|v|^2+ |u|^2\ dx ds  + \varepsilon  \int_0^{t} E_{u,v}(s) \ ds.
\end{equation}
Since 
\begin{align}
\label{varphi}
\varphi \equiv 1 \ for  \ |x| \geqslant R_2
\end{align}
we deduce that
\begin{align*}
&\int_{|x|>R_2} |u(t)|^2 +|v(t)|^2 \ dx + \int_0^t E^{R_2}(v,s) \ ds \\&\leqslant  \int_\Omega \varphi (|u(t)|^2 + |v(t)|^2) \ dx + \int_0^{t}  \int_\Omega \varphi(|\partial_t v|^2 + |\nabla v|^2) \ dx  ds.
\end{align*}
Combining this estimate with $\eqref{vgamma}$, we conclude $\eqref{envv}$. This finishes the proof of Lemma $\ref{lemme}$. 
\end{proof}
Now we give the proof of Proposition $\ref{prop1}$.
\begin{proof}[Proof of Proposition $\ref{prop1}$]
We distinguish the case $m_1=m_2=0 $ and  the case where $m_1 \in \R_+$ and $m_2\in \R_+^*$.
\\\textbf{First case $m_1=m_2=0$}. Multiplying the first equation of $\eqref{pb311}$ by $ \varphi u $ and integrating on $[0,t]\times \Omega$, we obtain
\begin{align*}
\Big[\int_\Omega&  \varphi (\partial_t u u + \frac{a(x)|u|^2}{2}  + b(x) uv) \ dx \Big]_0^{t} + \int_0^{t} \int_\Omega \varphi ( |\nabla u|^2 + |\partial_t u|^2 )\  dxds  
\end{align*}
\begin{equation}
\label{ugamma}
= \int_0^{t}\int_\Omega  2\varphi |\partial_t u|^2  + \frac{\Delta \varphi}{2} |u|^2  + \varphi b(x) v\partial_t u \ dx ds.
\end{equation}
Note that we have
\begin{align*}
\int_0^{t}\int_\Omega &\varphi  b(x) v \partial_t u \ dx ds = \int_0^{t}\int_\Omega \varphi v(\partial_t^2 v - \gamma^2\Delta v )  \ dx ds 
\end{align*}
\begin{equation}
\label{eq4}
= \Big[\int_\Omega \varphi \partial_t v v \ dx \Big]_0^{t} + \int_0^{t}\int_\Omega  \varphi (\gamma ^2|\nabla v|^2-  |\partial_t v|^2)-\gamma^2\frac{\Delta \varphi}{2}|v|^2\ dx ds. 
\end{equation}
So, combining this identity  with $\eqref{ugamma}$  and using $\eqref{2.5}$, we get
\begin{align*}
 \int_0^{t} \int_\Omega& \varphi(|\partial_t u|^2 + |\nabla u|^2) \ dx  ds \lesssim \int_0^{t}\int_\Omega  a(x) |\partial_t u|^2   + \int_0^{t} \int_\Omega \varphi(|\partial_t v|^2\\& + |\nabla v|^2) \ dxds +  \int_0^{t}\int_{\Omega_{R_2}} |u|^2 +|v|^2 \ dx ds 
\end{align*}
\begin{equation}
\label{*2**}
-\Big[\int_\Omega \varphi( \partial_t u u + b(x) uv +\frac{a(x)|u|^2}{2} -\partial_t v v ) \  dx\Big]_0^{t}. 
\end{equation}
Using that,
\begin{align*}
&\Big|\int_\Omega \varphi( \partial_t u u + b(x) uv +\frac{ a(x)|u|^2}{2} -\partial_t v v )(t) \  dx\Big| \\&\lesssim  C_\varepsilon E_{u,v}(0) + \int_\Omega \varphi (|u(t)|^2 + |v(t)|^2 ) \ dx \\&  \Big|\int_\Omega \varphi( \partial_t u u +b(x) uv +\frac{a(x)|u|^2}{2} -\partial_t v v )(0) \  dx\Big| \\&\lesssim E_{u,v}(0)+\|(u,v)(0)\|_{L^2}^2, 
\end{align*}
we obtain
\begin{align*}
 \int_0^{t} \int_\Omega\varphi(|\partial_t u|^2 + |\nabla u|^2) \ dx  ds \lesssim   C_\varepsilon E_{u,v}(0) + \int_\Omega \varphi (|u(t)|^2 + |v(t)|^2 ) \ dx 
\end{align*}
\begin{equation}
  + \int_0^{t} \int_\Omega\varphi(|\partial_t v|^2 + |\nabla v|^2) \ dx \ ds +  \int_0^{t}\int_{\Omega_{R_2}} |u|^2 +|v|^2 \ dx ds +\|(u,v)(0)\|_{L^2}^2.
\end{equation}
According to $\eqref{vgamma}$ and using $\eqref{varphi}$, we get
\begin{align*}
 \int_0^{t} E^{R_2}(u,s) ds \lesssim   C_\varepsilon E_{u,v}(0)   + (1-\frac{1}{\gamma^2})^2 E_{\partial_t u,\partial_t v}(0)  + \varepsilon\int_0^{t} E_{u,v}(s)\ ds 
\end{align*}
\begin{equation}
\label{*22**}
+  \int_0^{t}\int_{\Omega_{R_2}} |u|^2 +|v|^2 \ dx ds +\|(u,v)(0)\|_{L^2}^2,
\end{equation}
where $E^{R_2}(u,t) = \frac{1}{2}\int_{|x|>R_2} |\partial_t u(t,x)|^2 + |\nabla u(t,x)|^2\ dx$.
\\ Combining $\eqref{envv}$ and $\eqref{*22**}$, we conclude $\eqref{enex1g}$.
\\\textbf{Second case $m_1\in \R_+$ and $m_2\in \R_+^*$}. Multiplying the first and the second equation of $\eqref{pb311}$ respectively by $ \varphi u$ and $ \varphi v$ and integrating the sum of these results on $[0,t]\times \Omega$, we obtain
\begin{align*}
 \int_\Omega \varphi& \frac{ a(x)|u(t)|^2}{2} \ dx  + \int_0^{t} \int_\Omega\varphi(|\partial_t u|^2 + |\nabla u|^2 +m_1 |u|^2+|\partial_t v|^2 \\&+ |\nabla v|^2 +m_2 |v|^2 ) \ dx  ds  = \int_0^{t}\int_\Omega  2\varphi ( |\partial_t u|^2+ |\partial_t v|^2 )\ dx ds \\&+ \int_0^{t} \int_\Omega \frac{\Delta \varphi}{2}(|u|^2+ \gamma^2|v|^2) +2\varphi b(x)  v \partial_t u  \ dx ds \\&-\Big[\int_\Omega \varphi(\partial_t  u u + \partial_t v v+ b(x) uv)\ dx \Big]_0^{t} +  \int_\Omega \varphi \frac{ a(x)|u(0)|^2}{2}\ dx\\&\lesssim \int_0^{t}\int_\Omega  a(x)|\partial_t u|^2 + \varphi |\partial_t v|^2 + \varepsilon \|\varphi\|_\infty  |v|^2 \ dx ds \\&-\Big[\int_\Omega \varphi(\partial_t  u u + \partial_t v v+ b(x) uv)\ dx \Big]_0^{t} +  \int_\Omega \varphi \frac{a(x)|u(0)|^2}{2}\ dx
\end{align*}
\begin{equation}
\label{eq31}
  + \int_0^{t}\int_{\Omega_{R_2}} |u|^2+|v|^2 \ dx ds.
\end{equation}
Using the following estimates  for $\varepsilon_2$ small enough
\begin{align*}
&\Big|\int_\Omega \varphi((\partial_t  u u + \partial_t v v+ b(x)uv)(t))\ dx \Big| \lesssim E_{u,v}(0) + \varepsilon_2 \int_\Omega \varphi |u(t)|^2 \ dx,\\& \Big|\int_\Omega \varphi((\partial_t  u u + \partial_t v v+b(x) uv)(0))\ dx \Big| \lesssim E_{u,v}(0) + \|u(0)\|_{L^2}^2,
\end{align*}
and according to Lemma $\ref{lemme1}$, we infer $\eqref{enex1g}$.
The proof of proposition $\ref{prop1}$ is now completed.
\end{proof}
  \section{Estimate of energy in bounded region}
In this section, we will study the energy  in bounded domain. For this aim, we consider a function $  \psi  \in C_0^\infty (\R^d)$ such that $ 0  \leqslant  \psi \leqslant 1$ and 
$$ \psi(x) =\begin{cases} 1 & \text{ for  } |x| \leqslant R_3  \\ 0 & \text{ for }  |x| \geqslant R_4. \end{cases} $$
where $R_4> R_3 > R_1$ and $R_1>0$ be  such that $(\mathcal{A}_2)$ is satisfied.

It is easy to verify that $(u^i,v^i)=(\psi u, \psi v)$  satisfies the following system 
\begin{equation}
\label{sysin1}
  \left\{
    \begin{aligned}
     &  \partial_t^2 u^i -\Delta u^i  +m_1u^i+b(x)\partial_t v^i +a(x)\partial_t u^i=- 2\nabla \psi \nabla u - u \Delta \psi\  \text{in } \R_+\times \Omega_{R_4}  \\&
  \partial_t^2 v^i - \gamma^2\Delta v^i +m_2v^i-b(x)\partial_t u^i = -2\gamma^2\nabla \psi \nabla v- \gamma^2 v \Delta \psi\qquad  \text{in } \R_+\times \Omega_{R_4}  \\&
                 u^i=v^i=0 \qquad \qquad \qquad  \qquad \qquad  \qquad\qquad  \qquad\qquad  \qquad\quad \text{ on }  \R_+ \times \partial\Omega_{R_4} \\&
                 (u_0^i,u_1^i, v_0^i, v_1^ i)=  (\psi u_0,\psi u_1, \psi v_0, \psi v_1).                 
    \end{aligned}
  \right.
\end{equation}
\begin{proposition}
\label{prop2}
Let  $\gamma \in  \R_+^*$,  $(m_1,m_2) \in  \lbrace (0,0)\rbrace  \cup \R_+\times \R_+^* $ and $\psi$ be as above. Assume  that the assumption $(\mathcal{A}_1)$  holds and that $(\omega_b,T)$ geometrically controls $ \Omega$ for some $T>0$. 
Then  for every $\varepsilon >0$, there exist $C_\varepsilon >0$ such that for all  solution $(u,v)$  of $\eqref{pb311}$ with initial data $(u_0,u_1, v_0, v_1)\in \mathcal{H}_\gamma$, we have 

 \begin{align*}
 \int_t^{t+T}E_{R_3} (u,v,s) ds  \lesssim   C_\varepsilon \int_t^{t+T} \int_\Omega a(x) (|\partial_t u|^2+ (1-\frac{1}{\gamma^2})^2 |\partial_t^2 u|^2) \ dx ds 
 \end{align*}
 \begin{equation}
 \label{eningg}
+\varepsilon \int_t^{t+T} E_{u,v}(s) \ ds  + C_\varepsilon \int_t^{t+T}\int_{\Omega_{R_4}}| u|^2+| v|^2 \ dx ds +C_\varepsilon \int_t^{t+T} E^{R_3}(u,v,s)ds  + \Big[\mathcal{K}_\gamma\Big]_t^{t+T}
\end{equation}
for all $ t> 0$. Where
\begin{align*}
\mathcal{K}_\gamma= - \int_\Omega \frac{b(x)}{\gamma^2} \partial_t u^i \partial_tv^i + \nabla u^i \nabla((b(x) v^i)  +m_1ab(x) u^i v^i \ dx.
\end{align*} 
\end{proposition}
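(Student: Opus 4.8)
The plan is to transfer the estimate to the compactly supported pair $(u^i,v^i)=(\psi u,\psi v)$ and then to use a scalar observability inequality for $v^i$, fed by the damping through the coupling. Since $\psi\equiv 1$ on $\Omega_{R_3}$ we have $E_{R_3}(u,v,t)\leqslant E_{u^i,v^i}(t)$, so it suffices to bound $\int_t^{t+T}E_{u^i,v^i}(s)\,ds$. The pair $(u^i,v^i)$ solves $\eqref{sysin1}$ on the bounded domain $\Omega_{R_4}$ with homogeneous Dirichlet conditions and right-hand sides $-2\nabla\psi\nabla u-u\Delta\psi$ and $-2\gamma^2\nabla\psi\nabla v-\gamma^2 v\Delta\psi$. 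As $\nabla\psi$ and $\Delta\psi$ are supported in the annulus $C_{R_3,R_4}$, all contributions of these sources are bounded by $\int_t^{t+T}E^{R_3}(u,v,s)\,ds$ and by $\int_t^{t+T}\int_{\Omega_{R_4}}(|u|^2+|v|^2)\,dx\,ds$, two of the terms allowed on the right-hand side of $\eqref{eningg}$.

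Next I would apply the Bardos--Lebeau--Rauch observability estimate to the scalar Klein--Gordon (or wave, if $m_2=0$) equation $\partial_t^2 v^i-\gamma^2\Delta v^i+m_2 v^i=b(x)\partial_t u^i+f_v$ on $\Omega_{R_4}$. Since $(\omega_b,T)$ geometrically controls $\Omega$, this bounds the full $v^i$-energy,
\begin{equation*}
\int_t^{t+T}\!\!\int_{\Omega_{R_4}}\!\big(|\partial_t v^i|^2+|\nabla v^i|^2\big)\,dx\,ds\lesssim \int_t^{t+T}\!\!\int_{\omega_b}|\partial_t v^i|^2\,dx\,ds+\int_t^{t+T}\!\!\int_{\Omega_{R_4}}\!|v|^2\,dx\,ds+(\text{source}),
\end{equation*}
where the source stemming from $b\,\partial_t u^i$ is, by $(\mathcal{A}_1)$, dominated by $\int a|\partial_t u^i|^2$, and the $L^2$ remainder is a compact term to be removed later in Sections $4$--$5$. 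The $u^i$-energy is controlled in parallel: multiplying the first equation of $\eqref{sysin1}$ by $u^i$ and using the damping $\int a|\partial_t u^i|^2$ together with the coupling, one bounds $\int_t^{t+T}\int_{\Omega_{R_4}}(|\partial_t u^i|^2+|\nabla u^i|^2)$ by a right-hand side of the same kind.

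The heart of the matter is to dominate the observation $\int_t^{t+T}\int_{\omega_b}|\partial_t v^i|^2$ by the damping. For this I would repeat the crosswise multiplication of Lemma $\ref{lemme1}$ with the cutoff $\varphi$ replaced by the coupling weight $b$, which localizes everything to $\omega_b$: multiplying the first equation of $\eqref{sysin1}$ by $b\,\partial_t v^i$ and the second by $\tfrac{b}{\gamma^2}\partial_t u^i$ and integrating over $[t,t+T]\times\Omega$, the coupling terms produce $\int\!\int b^2|\partial_t v^i|^2\geqslant (b^-)^2\int\!\int_{\omega_b}|\partial_t v^i|^2$ on the left. On the right one finds the damping $\int a|\partial_t u^i|^2$ (after using $(\mathcal{A}_1)$ to absorb $\tfrac1{\gamma^2}\int b^2|\partial_t u^i|^2$), lower-order quantities, and time-boundary contributions, while the two second-order time derivatives combine as $\tfrac{b}{\gamma^2}\partial_s(\partial_s u^i\,\partial_s v^i)$ plus the excess $(1-\tfrac1{\gamma^2})\,b\,\partial_s^2 u^i\,\partial_s v^i$; the former integrates to the first boundary term of $[\mathcal{K}_\gamma]_t^{t+T}$ and the latter is the speed-mismatch term. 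Young's inequality splits this mismatch term into $\varepsilon|\partial_t v^i|^2$, absorbed on the left, and $C_\varepsilon(1-\tfrac1{\gamma^2})^2\,b\,|\partial_t^2 u^i|^2$; by $(\mathcal{A}_1)$ every $b$-weighted quantity is dominated by $a$, and since $0\leqslant\psi\leqslant1$ one has $a|\partial_t^2 u^i|^2\leqslant a|\partial_t^2 u|^2$, which is exactly the second-derivative term appearing in $\eqref{eningg}$.

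It then remains to combine the two steps: substituting the bound for $\int\!\int_{\omega_b}|\partial_t v^i|^2$ into the observability inequality, choosing the Young parameters small so as to absorb the $\varepsilon\int E_{u,v}$ contributions, and collecting the annulus remainders as $E^{R_3}$ and $\int_{\Omega_{R_4}}(|u|^2+|v|^2)$ yields $\eqref{eningg}$. The main obstacle is precisely this localization step when $\gamma\neq1$: the speed mismatch forces the appearance of $\partial_t^2 u$, which is why one must work in $\mathcal{H}_\gamma=D(\mathcal{A})\cap(L^2(\Omega))^4$ for $\gamma\neq1$ (so that $\partial_t^2 u$ carries finite energy) and why the damping has to control $\int a\,(1-\tfrac1{\gamma^2})^2|\partial_t^2 u|^2$. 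A secondary difficulty is the careful bookkeeping of the integrations by parts in space and time needed to verify that the leftover boundary contributions assemble exactly into $[\mathcal{K}_\gamma]_t^{t+T}$.
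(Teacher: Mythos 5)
Your skeleton matches the paper's proof: cut off with $\psi$, apply a scalar observability estimate to the $v^i$-equation on the bounded domain $\Omega_{R_4}$ with observation region tied to $\omega_b$, and control the observation term $\int\!\!\int_{\omega_b}|\partial_t v^i|^2$ by the damping via the cross-multiplication by $b\,\partial_t v^i$ and $\frac{b}{\gamma^2}\partial_t u^i$ --- this is exactly the paper's Lemma \ref{lemme2}, including the way the speed mismatch generates the $(1-\frac{1}{\gamma^2})^2|\partial_t^2 u|^2$ term and the boundary term $\mathcal{K}_\gamma$. There are, however, two genuine gaps.

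First, your treatment of the $u^i$-energy does not work as written. Multiplying the first equation of \eqref{sysin1} by $u^i$ only yields an equipartition-type identity (it trades $\int\!\!\int|\nabla u^i|^2$ for $\int\!\!\int|\partial_t u^i|^2$ up to lower-order and boundary contributions); it cannot bound $\int_t^{t+T}\!\int_{\Omega_{R_4}}|\partial_t u^i|^2$ by the localized quantity $\int\!\!\int a|\partial_t u|^2$, because the damping is only seen on $\omega_a$ and no multiplier by $u^i$ propagates that information to the rest of $\Omega_{R_4}$. The paper instead applies the same observability estimate (Lemma \ref{obs}) to the $u^i$-equation as well, treating $a\partial_t u^i+b\partial_t v^i$ and the commutator terms as observation and source (this is estimate \eqref{energ1}); you already have every ingredient needed for this, but your step must be replaced by it.

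Second, you invoke observability on $\Omega_{R_4}$ directly from the hypothesis that $(\omega_b,T)$ geometrically controls the exterior domain $\Omega$. This does not transfer automatically: $\Omega_{R_4}$ carries a new reflecting boundary at $|x|=R_4$, and generalized geodesics bouncing off it are not generalized geodesics of $\Omega$, so a ray of $\Omega_{R_4}$ could in principle avoid $\omega_b$ even though every ray of $\Omega$ meets it. The paper's fix is that, by $(\mathcal{A}_2)$ (which is built into the choice $R_1<R_3<R_4$ in the definition of $\psi$), one has $B_{R_1}^c\subset\omega_b$, so an entire neighborhood of the artificial boundary lies inside the control region and $(\omega_b\cap B_{R_4},T)$ does control $\Omega_{R_4}$. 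This justification needs to be stated and used; without it the application of the observability lemma on the truncated domain is unfounded.
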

In order to prove proposition $\ref{prop2}$ we need the following result. 
\begin{lemma}
\label{lemme2}
Assume that the hypothesis of Proposition \ref{prop2} hold. Then for every $\varepsilon >0$, there exists $C_\varepsilon >0$ such that for all  solution $(u,v)$  of $\eqref{pb311}$ with initial data $(u_0,u_1, v_0, v_1)\in \mathcal{H}_\gamma$, we have 
\begin{align*}
\int_t^{t+T} \int_{\Omega}b(x)^2|\partial_t v^i|^2& \ dx ds \lesssim C_\varepsilon \int_t^{t+T}\int_\Omega a(x)(|\partial_t u|^2+ (1-\frac{1}{\gamma^2})^2|\partial_t^2 u|^2)  \ dxds \\&+ \varepsilon \int_t^{t+T} E_{u,v}(s) \ ds+ C_\varepsilon \int_t^{t+T}\int_{\Omega_{R_4}}| v|^2+ |  u|^2\ dx ds 
\end{align*}
\begin{equation}
\label{Lem6}
+ C_\varepsilon \int_t^{t+T}\int_{C_{R_3,R_4}}| \nabla u|^2+ |  \nabla v|^2\ dx ds+ \Big[\mathcal{K}_\gamma\Big]_t^{t+T} ,
\end{equation}
for all $t >0$. 
\end{lemma}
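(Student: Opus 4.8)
The plan is to mimic the multiplier computation of Lemma \ref{lemme1}, but now applied to the truncated pair $(u^i,v^i)=(\psi u,\psi v)$ which solves the interior system \eqref{sysin1}, with the cut-off $\varphi$ replaced by the coupling coefficient $b(x)$ itself so that the weight $b^2$ is produced on the left-hand side. Concretely, I would multiply the first equation of \eqref{sysin1} by $b(x)\partial_t v^i$ and the second one by $\frac{1}{\gamma^2}b(x)\partial_t u^i$, add the two identities, and integrate over $[t,t+T]\times\Omega$. The coupling term $b(x)\partial_t v^i$ of the first equation, tested against $b(x)\partial_t v^i$, immediately yields the target quantity $\int_t^{t+T}\int_\Omega b^2|\partial_t v^i|^2$, while the coupling term $-b(x)\partial_t u^i$ of the second equation, tested against $\frac{b}{\gamma^2}\partial_t u^i$, produces $-\frac{1}{\gamma^2}\int_t^{t+T}\int_\Omega b^2|\partial_t u^i|^2$, which is moved to the right-hand side.

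The two second-order-in-time contributions $b\,\partial_t^2u^i\,\partial_t v^i+\frac{b}{\gamma^2}\partial_t^2 v^i\,\partial_t u^i$ are handled exactly as in the passage producing $F_\gamma$ in Lemma \ref{lemme1}: integrating by parts in time \emph{only} the $\partial_t^2 v^i$ term gives the boundary contribution $\frac{b}{\gamma^2}\partial_t u^i\partial_t v^i$ (the first piece of $\mathcal{K}_\gamma$) together with the speed-mismatch remainder $(1-\frac{1}{\gamma^2})\int_t^{t+T}\int_\Omega b\,\partial_t^2 u^i\,\partial_t v^i$. Integrating the Laplacian terms by parts in space (using $u^i=v^i=0$ on $\partial\Omega_{R_4}$) and in time, as in the computation \eqref{cro}, yields the boundary term $\nabla u^i\nabla(b v^i)$ and, together with the mass terms $m_1 b\, u^i\partial_t v^i$ and $\frac{m_2}{\gamma^2} b\, v^i\partial_t u^i$, the remaining pieces of $[\mathcal{K}_\gamma]_t^{t+T}$; every other interior term generated here carries a derivative of $b$ and is therefore of lower order, absorbed into $\varepsilon\int E_{u,v}$ and $C_\varepsilon\int_{\Omega_{R_4}}|u|^2+|v|^2$.

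The genuinely new feature compared with Lemma \ref{lemme1} is the presence of the source terms $-2\nabla\psi\nabla u-u\Delta\psi$ and $-2\gamma^2\nabla\psi\nabla v-\gamma^2 v\Delta\psi$. Since $\nabla\psi$ and $\Delta\psi$ are supported in the annulus $C_{R_3,R_4}$, testing them against $b\partial_t v^i$ and $\frac{b}{\gamma^2}\partial_t u^i$ and applying Young's inequality produces precisely the localized gradient term $C_\varepsilon\int_t^{t+T}\int_{C_{R_3,R_4}}|\nabla u|^2+|\nabla v|^2$ and the zeroth-order term $C_\varepsilon\int_t^{t+T}\int_{\Omega_{R_4}}|u|^2+|v|^2$, plus an $\varepsilon\int E_{u,v}$ part that is absorbable. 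I would then apply Young's inequality systematically to the remaining right-hand side products, splitting each into an $\varepsilon$-part (controlled by $E_{u,v}$) and a $C_\varepsilon$-part.

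The main obstacle is the set of terms still weighted by $b^2$ against $u$-derivatives, namely $\frac{1}{\gamma^2}\int\int b^2|\partial_t u^i|^2$, the damping cross-term $a b\,\partial_t u^i\partial_t v^i$, and above all the speed-mismatch remainder $(1-\frac{1}{\gamma^2})\int\int b\,\partial_t^2 u^i\,\partial_t v^i$. Here assumption $(\mathcal{A}_1)$ is essential: on $\mathrm{supp}(b)$ the damping is bounded below, which yields the domination $b^2\lesssim a$, so that with $u^i=\psi u$ and a Young splitting one gets $\int\int b^2|\partial_t u^i|^2\lesssim\int\int a|\partial_t u|^2$ and $(1-\frac{1}{\gamma^2})\int\int b\,\partial_t^2 u^i\partial_t v^i\lesssim C_\varepsilon(1-\frac{1}{\gamma^2})^2\int\int a|\partial_t^2 u|^2+\varepsilon\int\int b^2|\partial_t v^i|^2$, the last summand being absorbed back into the left-hand side. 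This is exactly why the statement needs the extra regularity encoded in $\mathcal{H}_\gamma$ (the $D(\mathcal{A})$ component, ensuring $\partial_t^2 u\in L^2$) when $\gamma\neq 1$, and why the $\partial_t^2 u$ contribution disappears in the equal-speed case $\gamma=1$.
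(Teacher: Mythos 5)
Your proposal is correct and follows essentially the same route as the paper: the authors likewise multiply the two equations of \eqref{sysin1} by $b(x)\partial_t v^i$ and $\frac{b(x)}{\gamma^2}\partial_t u^i$, integrate over $[t,t+T]\times\Omega$ to produce $\big[\mathcal{K}_\gamma\big]_t^{t+T}$ and the weighted term $\int b^2|\partial_t v^i|^2$ on the left, and then close with Young's inequality together with hypothesis $(\mathcal{A}_1)$ exactly as you describe. The only cosmetic difference is that the paper puts the $\varepsilon$-small part of the speed-mismatch term into $\varepsilon\int E_{u,v}$ rather than absorbing it back into the left-hand side.
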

\begin{proof}[proof of Lemma $\ref{lemme2}$ ]
We multiply the first and the second equation of $\eqref{sysin1}$ respectively by $ b(x)\partial_t v^i$ and $\frac{b(x)}{\gamma^2}\partial_t u^i$ and  we integrate the sum of these results on $[t,t+T]\times \Omega$, we get
 
\begin{align*}
\Big[\mathcal{K}_\gamma \Big]_t^{t+T} +& \int_t^{t+T}\int_\Omega  b^2(x)|\partial_t v^i|^2 \ dx ds = \int_t^{t+T}\int_\Omega  \frac{b^2(x)}{\gamma^2} |\partial_t u^i|^2 - ab(x) \partial_tu^i \partial_t v^i  
\\&+(m_1-\frac{m_2}{\gamma^2})b(x) v^i \partial_t u^i ) \ dx ds -\int_t^{t+T}\int_\Omega b(x) (2\nabla u \nabla \psi + \Delta \psi u) \partial_t v^i\\& +\frac{b(x)}{\gamma^2}(2\nabla v \nabla \psi + \Delta \psi v) \partial_t u^i \ dx ds  +\int_t^{t+T}\int_\Omega  (\frac{1}{\gamma^2}-1)b(x) \partial_t^2 u^i\partial_tv^i \ dxds\\&  -\int_t^{t+T}\int_\Omega \partial_t u^i (\Delta b(x) v^i + 2\nabla b(x)\nabla v^i)\ dx ds.
\end{align*}
From Young's inequality and using hypothesis ($\mathcal{A}_1$), we infer that
\begin{align*}
&\Big[\mathcal{K}_\gamma \Big]_t^{t+T} + \int_t^{t+T}\int_\Omega  b^2(x)|\partial_t v^i|^2 \ dx ds \\&\lesssim C_\varepsilon\int_t^{t+T}\int_\Omega  a(x) (|\partial_t u|^2 +(1-\frac{1}{\gamma^2})^2 |\partial_t^2 u|^2) \ dx ds  
\\&+ \varepsilon \int_t^{t+T}\int_\Omega (m_1-\frac{m_2}{\gamma^2})^2 |v|^2   + |\partial_t u|^2 + |\partial_t v|^2 +|\nabla v|^2 \ dx ds  
\end{align*}
\begin{equation}
\label{GC}
+ C_\varepsilon\int_t^{t+T}\int_{\Omega_{R_4}} |u|^2+|v|^2 \ dx ds  +C_\varepsilon\int_t^{t+T}\int_{C_{R_3,R_4}} |\nabla u|^2+| \nabla v|^2 \ dx ds.
\end{equation}

This implies $\eqref{Lem6}$.

\end{proof}
\begin{proof}[Proof of proposition $\ref{prop2}$]
First, we recall  the following observability estimate for the  wave equation  ( see  proposition $3$, \cite{e3}).
\begin{lemma}
\label{obs}
 Let $\gamma , T > 0$ and $\mathcal{O} $ a bounded domain.  Let $ \phi$ be a nonnegative function on $\mathcal{O}$ and setting
 $$ \mathcal{V}=\{\phi(x) >0\}.$$
 We assume that $(\mathcal{V},T)$ satisifies the \textbf{GCC}. There exists $C_T>0$, such that for all $(u_0,u_1)\in H_0^1 (\mathcal{O})\times L^2(\mathcal{O}), f \in L_{loc}^2(\R_+,L^2(\mathcal{O}))$, and all $t>0$ the solution of 
 \begin{equation} 
  \left\{
    \begin{aligned}
     &  \partial_t^2 u -\gamma^2\Delta u + mu=f& \text{ in } \R_+ \times \mathcal{O},\\&
                 u=0 &  \text{ on }  \R_+ \times \partial\mathcal{O}, \\&
                 (u(0,x),\partial_t u(0,x))=(u_0,u_1)&\forall x \in\mathcal{O}. 
    \end{aligned}
  \right.
\end{equation}
where $m \geqslant 0$, satisfies with
 \begin{align*}
 E_u(t)= \frac{1}{2}\int_\mathcal{O} |\partial_t u(t,x)|^2 +m |u(t,x)|^2 + \gamma^2 |\nabla u(t,x)|^2 \ dx,
 \end{align*}
 the inequality
 \begin{equation}
 \int_t^{t+T} E_u(s) \ ds \leqslant C_T \int_t^{t+T} \int_\mathcal{O}\phi (x) |\partial_t u|^2 + |f|^2 \ dxds.
 \end{equation}
\end{lemma}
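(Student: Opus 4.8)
The plan is to deduce this forced observability inequality from the classical Bardos--Lebeau--Rauch observability for the free wave operator $\partial_t^2-\gamma^2\Delta$ under the geometric control condition, treating the zeroth order potential $mu$ and the source $f$ as perturbations and then removing a compact remainder by a compactness--uniqueness argument. Since the coefficients are time independent, it suffices by translation to prove the estimate for $t=0$ with a constant depending only on $T$. I would set $g:=f-mu$, so that $u$ solves $\partial_t^2 u-\gamma^2\Delta u=g$ with $u=0$ on $\partial\mathcal{O}$; splitting $u$ into its free evolution plus the Duhamel contribution of $g$, invoking the homogeneous observability of \cite{e8} for $(\mathcal{V},T)$, and using $\|g\|_{L^2}^2\lesssim\|f\|_{L^2}^2+m^2\|u\|_{L^2}^2$, one obtains a constant $C_T>0$ with
\begin{align*}
\int_0^{T} E_u(s)\, ds \lesssim \int_0^{T}\!\!\int_{\mathcal{O}} \phi(x)|\partial_t u|^2\, dx\, ds + \int_0^{T}\|f(s)\|_{L^2(\mathcal{O})}^2\, ds + \int_0^{T}\|u(s)\|_{L^2(\mathcal{O})}^2\, ds.
\end{align*}
The claim then reduces to absorbing the compact lower order term $\int_0^{T}\|u(s)\|_{L^2(\mathcal{O})}^2\, ds$ into the left-hand side.

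This absorption is the main obstacle, and I would treat it by contradiction. If the inequality failed there would exist solutions $u_n$, with data in $H_0^1\times L^2$ and sources $f_n$, normalized by $\int_0^{T} E_{u_n}(s)\, ds=1$ and satisfying $\int_0^{T}\!\int_{\mathcal{O}}\phi|\partial_t u_n|^2+\int_0^{T}\|f_n\|_{L^2}^2\to 0$. The normalization bounds $u_n$ in $L^2(0,T;H_0^1)$ and $\partial_t u_n$ in $L^2(0,T;L^2)$, so the Aubin--Lions lemma provides a subsequence with $u_n\to u$ strongly in $L^2((0,T)\times\mathcal{O})$; the reduced inequality then gives $\int_0^{T}\|u\|_{L^2}^2>0$, hence $u\not\equiv 0$. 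Passing to the limit, $u$ solves the homogeneous equation $\partial_t^2 u-\gamma^2\Delta u+mu=0$, and since $\int_0^{T}\!\int\phi|\partial_t u_n|^2\to0$ it satisfies $\partial_t u\equiv 0$ on $(0,T)\times\mathcal{V}$.

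To close the argument I would use unique continuation. The function $w=\partial_t u$ solves the same homogeneous Klein--Gordon equation and vanishes on $(0,T)\times\mathcal{V}$, so, because $(\mathcal{V},T)$ satisfies the geometric control condition, the Holmgren--Tataru unique continuation theorem yields $w\equiv 0$. Thus $u$ is time independent and solves $-\gamma^2\Delta u+mu=0$ with zero Dirichlet data; testing against $u$ forces $\nabla u=0$ and hence $u\equiv 0$, contradicting $u\not\equiv 0$. This establishes the estimate at $t=0$, and time-translation invariance extends it to every $t>0$. The delicate points are the passage to a homogeneous limit equation carrying the trace condition $\partial_t u=0$ on $\mathcal{V}$ and the applicability of unique continuation, which is precisely where the geometric control hypothesis enters.
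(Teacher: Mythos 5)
The paper does not prove this lemma at all: it is recalled verbatim from Proposition 3 of \cite{e3} (Daoulatli), so there is no internal proof to compare against. Your architecture (Bardos--Lebeau--Rauch observability for the free operator, the terms $mu$ and $f$ treated by Duhamel as perturbations, then a compactness--uniqueness argument to absorb $\int_0^T\|u\|_{L^2}^2$) is the standard route to such statements and is the one used in the cited source; the translation-invariance reduction to $t=0$, the Aubin--Lions extraction, and the identification of the limit equation with the extra condition $\partial_t u=0$ on $(0,T)\times\mathcal{V}$ are all sound.

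The one step that is not actually justified as written is the unique continuation. You assert that $w=\partial_t u$, which solves the homogeneous Klein--Gordon equation and vanishes on $(0,T)\times\mathcal{V}$, must vanish identically ``because $(\mathcal{V},T)$ satisfies the geometric control condition, by the Holmgren--Tataru unique continuation theorem.'' But the hypothesis of the Tataru--Robbiano--Zuily--H\"ormander theorem is not GCC: it is a lower bound on the observation time of the form $T>2\sup_{x\in\mathcal{O}}\mathrm{dist}(x,\mathcal{V})/\gamma$, and GCC at time $T$ only guarantees $T\geqslant 2\sup_x \mathrm{dist}(x,\mathcal{V})/\gamma$ (split a geodesic through the farthest point at its midpoint), so the strict inequality needed for global vanishing on $(0,T)$ is not automatic. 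The standard repair, and the one you should use, avoids any time threshold: the space $N$ of finite-energy solutions of the homogeneous equation with $\partial_t u=0$ on $(0,T)\times\mathcal{V}$ satisfies the already-established inequality $\int_0^T E_u\lesssim\int_0^T\|u\|_{L^2}^2$, hence (by Aubin--Lions again) its unit ball is compact and $N$ is finite dimensional; $N$ is stable under time differentiation, so it carries an eigenfunction $u=e^{\lambda t}\psi(x)$, and then either $\lambda=0$ (whence $-\gamma^2\Delta\psi+m\psi=0$ with Dirichlet data gives $\psi=0$ by testing against $\psi$) or $\psi=0$ on the nonempty open set $\mathcal{V}$ and elliptic unique continuation for $-\gamma^2\Delta\psi+(m+\lambda^2)\psi=0$ gives $\psi\equiv0$. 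A secondary point you gloss over: the lemma only assumes $\phi\geqslant0$ with $\mathcal{V}=\{\phi>0\}$, so $\int_{\mathcal{V}}|\partial_t u|^2$ is not directly dominated by $\int\phi|\partial_t u|^2$; one must first replace $\mathcal{V}$ by a set of the form $\{\phi>\delta\}$ (or a relatively compact open subset of $\mathcal{V}$) that still satisfies GCC, on which $\phi$ is bounded below, before invoking the free observability.
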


Let $\omega_{b,1} = \omega_b \cap B_{R_4} = \{x \in \Omega \cap B_{R_4}, \ b(x) > b^- > 0\}$. Since $  (\omega_b,T)$ satisfies  the  \textbf{ GCC}, $ B_{R_1^c} \subset \omega_b$  and $  R_4 >R_1$, we conclude that $ (\omega_{b,1},T)$ geometrically controls $\Omega_{R_4}$.\\
So, according to Lemma $\ref{obs}$ and using hypothesis $(\mathcal{A}_1)$,  we have
\begin{align*}
\int_t^{t+T} E_{v^i}(s)ds& \lesssim   \int_t^{t+T} \int_{\omega_{b,1}}   |\partial_t v^i |^2 \ dx ds + \int_t^{t+T} \int_{\Omega} b(x)|\partial_t u^i|^2 dx ds \\& + \int_t^{t+T}\int_{C_{R_3,R_4}}|\nabla v|^2 \ dxds+  \int_t^{t+T} \int_{\Omega_{R_4}} |v|^2 \ dx ds \\& \lesssim \int_t^{t+T} \int_\Omega b^2(x)  |\partial_t v^i |^2 \ dx ds + \int_t^{t+T} \int_{\Omega} a(x)|\partial_t u|^2 dx ds
\end{align*}
\begin{equation}
\label{energ2}
+ \int_t^{t+T}\int_{C_{R_3,R_4}}|\nabla v|^2 \ dxds+  \int_t^{t+T} \int_{\Omega_{R_4}} |v|^2 \ dx ds, \quad t >0,
\end{equation}
where 
\begin{align*}
 E_{v^i}(t) = \frac{1}{2}\int_{\Omega} |\nabla v^i(t,x)|^2 + |\partial_t v^i(t,x)|^2 + m_2|v^i(t,x)|^2 \ dx.
\end{align*}
We have also 
\begin{align*}
\int_t^{t+T} E_{u^i}(s)ds& \lesssim   \int_t^{t+T} \int_{\Omega} a(x) |\partial_t u |^2 + b^2(x)|\partial_t v^i|^2  dx ds \\&
\end{align*}
\begin{equation}
\label{energ1}
 + \int_t^{t+T}\int_{C_{R_3,R_4}}|\nabla u|^2 \ dxds+  \int_t^{t+T} \int_{\Omega_{R_4}} |u|^2 \ dx ds , \quad t >0,
\end{equation}
where
\begin{align*}
E_{u^i}(t) = \frac{1}{2}\int_{\Omega} |\nabla u^i(t,x)|^2 + |\partial_t u^i(t,x)|^2+m_1|u^i(t,x)|^2 \ dx.
\end{align*}
Adding the two estimates above and using $ \eqref{Lem6}$, we deduce that

\begin{align*}
\int_t^{t+T} &E_{u^i ,v^i}(s)ds \lesssim C_\varepsilon \int_t^{t+T}\int_{\Omega} a(x)( |\partial_t u |^2 + (1-\frac{1}{\gamma^2})^2|\partial_t^2 u|^2)  dx ds \\& + \varepsilon \int_t^{t+T}  E_{u,v}(s) \ ds + C_\varepsilon \int_t^{t+T} E^{R_3}(u,v,s) ds
\end{align*}
\begin{equation}
\label{**}
+ C_\varepsilon\int_t^{t+T}\int_{\Omega_{R_4}} | u|^2+|v|^2 dx ds  +\Big[\mathcal{K}_\gamma\Big]_t^{t+T}.
\end{equation} 
Since $\psi \equiv 1 $ for $|x|\leqslant R_3$, we get 
\begin{align*}
\int_t^{t+T} E_{R_3}(u,v,s) \ ds \leqslant \int_t^{t+T} E_{u^i ,v^i}(s)ds
\end{align*}
Combining this estimate with $\eqref{**}$, we conclude $\eqref{eningg}$.
\end{proof}
\section{Weak observability estimate}
In this section, we prove  the following proposition.
\begin{proposition}
\label{lemme3}
Let $\gamma \in  \R_+^*$ and $m_1,m_2 \in \R_+$. Let $R_1>0$ be  such that $(\mathcal{A}_2)$ is satisfied and $R_5>R_1$.   
 We assume  that the assumption $(\mathcal{A}_1)$   holds. Then for every $T> T_{\omega_b}$ and $ \alpha > 0$, there exists $C_{T,\alpha} > 0 $, such that for all $(u_0,u_1,v_0,v_1)\in (H_0^1(\Omega)\times L^2(\Omega))^2$, and all $t >0$, the solution of the system $\eqref{pb311}$ satisfies the following inequality
\begin{equation}
\label{4.9}
\int_t^{t+T}\int_{\Omega_{R_5}} |v|^2 + |u|^2 \ dx ds \leqslant C_{T,\alpha} \int_t^{t+T} \int_{\Omega} a(x) |\partial_t u |^2 \ dx ds  + \alpha \int_t^{t+T} E_{u,v}(s)\ ds.
\end{equation}
\end{proposition}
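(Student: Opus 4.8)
The plan is to argue by contradiction, combining a normalization/compactness scheme with a unique continuation property for the limiting system; since $\eqref{pb311}$ is autonomous, by time translation I may take $t=0$. If $\eqref{4.9}$ failed, then for every $n\in\N$ there would be a solution $(u_n,v_n)$ with
\[
\int_0^T\int_{\Omega_{R_5}}|u_n|^2+|v_n|^2\,dxds=1,\qquad
n\int_0^T\int_\Omega a(x)|\partial_t u_n|^2\,dxds+\alpha\int_0^T E_{u_n,v_n}(s)\,ds<1 .
\]
Hence the dissipation $\int_0^T\int_\Omega a|\partial_t u_n|^2$ tends to $0$ and $\int_0^T E_{u_n,v_n}$ stays bounded. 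Integrating the energy identity $\eqref{decroissance}$ gives $E_{u_n,v_n}(0)=E_{u_n,v_n}(T)+\int_0^T\int_\Omega a|\partial_t u_n|^2$, and together with the monotonicity of the energy this bounds $E_{u_n,v_n}(0)$, hence the initial data, in $\mathcal H$.

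I would then extract a subsequence with $(u_n,v_n)$ converging weakly-$*$ in $L^\infty(0,T;\mathcal H)$ to a limit $(u,v)$, still a solution of $\eqref{pb311}$. On the bounded set $\Omega_{R_5}$ the sequence is bounded in $L^\infty(0,T;H^1)$ with time derivative bounded in $L^\infty(0,T;L^2)$, so by Aubin--Lions/Rellich the convergence is strong in $L^2((0,T)\times\Omega_{R_5})$; passing to the limit in the normalization yields $\int_0^T\int_{\Omega_{R_5}}|u|^2+|v|^2\,dxds=1$, so $(u,v)\not\equiv0$. On the other hand, weak lower semicontinuity of $w\mapsto\int_0^T\int_\Omega a|w|^2$ forces $\partial_t u\equiv0$ on $(0,T)\times\{a>0\}$; using $(\mathcal A_1)$ this makes the coupling terms $a\partial_t u$ and $b\partial_t u$ vanish, so in the limit the equations decouple into $\partial_t^2 u-\Delta u+m_1u+b\partial_t v=0$ and the free equation $\partial_t^2 v-\gamma^2\Delta v+m_2v=0$.

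The heart of the argument, and the step I expect to be the main obstacle, is then to prove that this decoupled limit with $\partial_t u\equiv0$ on the control region actually vanishes. Differentiating the first equation in time and restricting to $\omega_b$ (where $b\ge b^->0$) gives $\partial_t^2 v\equiv0$ there, so that $\partial_t v$ solves the free Klein--Gordon equation and has time derivative vanishing on $\omega_b\times(0,T)$; since $(\omega_b,T)$ satisfies the \textbf{GCC} with $T>T_{\omega_b}$, the observability estimate of Lemma~\ref{obs} (applied with a cutoff supported in $\omega_b$) should force $\partial_t v\equiv0$, whence the stationary equation $\gamma^2\Delta v=m_2v$ with $v\in H_0^1(\Omega)$ gives $v\equiv0$; feeding this back leaves a single damped equation for $u$ with $\partial_t u=0$ on the \textbf{GCC} set $\omega_a\supseteq\omega_b$, and a last application of Lemma~\ref{obs} yields $u\equiv0$, contradicting the normalization. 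The delicate point is that the limit is only of finite energy, so its time derivatives need not be admissible solutions; one must therefore either first establish $\eqref{4.9}$ on a dense class of smooth data with a uniform constant and pass to the limit, or invoke a genuine unique continuation theorem valid at the energy level for the coupled system — this is precisely the unique continuation result the estimate rests on.
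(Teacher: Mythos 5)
Your overall scheme is the same as the paper's: contradiction, normalization by the localized $L^2$-norm, weak compactness plus Rellich on $\Omega_{R_5}$, identification of a nonzero limit solving \eqref{pb311} with $a\,\partial_t u\equiv 0$, differentiation in time to get $\partial_t^2 v=0$ on $\omega_b$ (via $(\mathcal A_1)$), and then unique continuation for the free equation on a set satisfying the \textbf{GCC}. Up to that point the argument is sound, including the decoupling of the second equation and the a priori bound on $E_{u_n,v_n}(0)$.

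The genuine gap is the one you yourself flag in the last sentence, and neither of your proposed remedies closes it. The function $w=\partial_t v$ is only in $L^2((0,T)\times\Omega)$, so Lemma~\ref{obs} (which requires $H_0^1\times L^2$ data) does not apply to it, and the classical unique continuation results are likewise stated at the energy level. Your first fix --- proving \eqref{4.9} on smooth data with a uniform constant and passing to the limit --- does not help, because the regularity problem concerns the \emph{weak limit} in the contradiction argument, which is only of energy class no matter how smooth the approximating sequence $(u_n,v_n)$ is taken (the paper indeed already reduces to smooth solutions at the outset, and still faces this issue). Your second fix, ``invoke a unique continuation theorem valid at the energy level,'' is precisely the statement that has to be proved. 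The paper closes the gap with a microlocal argument: from $\partial_t w=0$ on $(0,T)\times\omega_b$ and the equation $\partial_t^2w-\gamma^2\Delta w+m_2w=0$, the $H^1$-wavefront set of $w$ over $\omega_b$ is contained in $\{\tau=0\}\cap\{\tau^2=\gamma^2|\xi|^2\}=\{\tau=\xi=0\}$, hence is empty there; the Melrose--Sj\"ostrand propagation of regularity along generalized bicharacteristics, combined with the \textbf{GCC} for $\omega_b$ and $T>T_{\omega_b}$, then upgrades $w$ to $H^1_{loc}((0,T)\times\Omega)$. Only after this regularization does one set $\tilde w=\partial_t w\in L^2$, observe that $\tilde w\equiv 0$ on $\omega_b\supset\R^d\setminus\Omega_{R_5}$, and apply the classical unique continuation result on the bounded domain $\Omega_{R_5}$ to get $\tilde w\equiv 0$, whence $-\gamma^2\Delta z+m_2z=0$ and $z=0$, and similarly $y=0$. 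Without this propagation-of-singularities step (or an equivalent regularization), the final application of observability/unique continuation is not justified, so your proof as written is incomplete at its central point.
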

\begin{proof}[Proof of Proposition $\ref{lemme3}$]
We note that for each $(u_0,u_1,v_0,v_1) \in (H_1^0(\Omega) \times L^2(\Omega))^2$, the solution $(u,v)$  are given as the limit of smooth solutions $(u_n, v_n)(t)$ with $ (u_{n}, v_{n})(0)= (u_{n,0},v_{n,0}) \in (C_0^\infty(\Omega)) ^2 $ and $(\partial_t u_n , \partial_t v_n)(0) = (u_{n,1}, v_{n,t}) \in (C_0^\infty(\Omega)) ^2$  such that $(u_{n,0},v_{n,0})  \to (u_{0},v_{0}) \in (H_0^1(\Omega))^2$ and $ (u_{n,1}, v_{n,1}) \to (u_{1}, v_{1}) \in (L^2(\Omega))^2 $. Note that 
\begin{align*}
&\|u_n(t,.)-u(t,.)\|_{H^1} + \|\partial_t u_n(t,.)- \partial_t u(t,.) \|_{L^2}  \xrightarrow[ n \to +\infty]{\text{}} 0,\\&\|v_n(t,.)-v(t,.)\|_{H^1} + \|\partial_t v_n(t,.)- \partial_t v(t,.) \|_{L^2}  \xrightarrow[ n \to +\infty]{\text{}} 0,
\end{align*}
uniformly on the each closed interval $[0,T]$ for any $T>0$. Therefore we may assume that $(u,v)$ is smooth.

To prove the estimate $\eqref{4.9}$, we argue by contradiction.  We assume that there exist a positive sequence $(t_n)$ and a  sequence 

$$\mathcal{U}_n= (u_n,\partial_t u_n, v_n, \partial_t v_n)$$ of solution of the system $\eqref{pb311}$ with initial data $(u_{n,0}, u_{n,1}, v_{n,0}, v_{n,1})\in (H_0^1(\Omega)\times L^2(\Omega))^2$, such that 

\begin{align*}
 \int_{t_n}^{t_n+T}\int_{\Omega_{R_5}} |u_n|^2+|v_n|^2 \ dx ds  &\geqslant n  \int_{t_n}^{t_n+T}\int_\Omega a(x) |\partial_t u_n|^2 \ dx dt \\&+ \alpha \int_{t_n}^{t_n+T} E_{u_n,v_n} \ ds
\end{align*}
Set $$ \beta_n^2 = \int_{t_n}^{t_n+T}\int_{\Omega_{R_5}} |u_n|^2+|v_n|^2 \ dx ds  $$ 
and 
$$  (y_n,\partial_t y_n, z_n, \partial_t z_n)(t):= \frac{\mathcal{U}_n(t+t_n)}{\beta_n}.$$
We infer that 
\begin{align}
&\int_0^T\int_{\Omega_{R_5}} |y_n|^2+ |z_n|^2 \ dx ds=1, \\& \label{ctg} \int_0^T \int_\Omega a(x) |\partial_t y_n|^2 \ dx ds \leqslant \frac{1}{n},\\& \int_0^T E_{y_n,z_n}(s) \ ds \leqslant \frac{1}{\alpha}.
\end{align}
Therefore 
\begin{align*}
&(y_n,z_n) \rightharpoonup (y,z) \text{ in } \ L^2((0,T), H_0^1(\Omega))\cap W^{1,2}((0,T), L^2(\Omega)),
\end{align*}
with respect to the weak topology. By Rellich's lemma, we can assume  that 
\begin{align*}
(y_n,z_n) \to (y,z) \text{ in }  \ (L^2((0,T)\times \Omega_{R_5}))^2.
\end{align*}
It is easy to see that the limit $(y,z)$ satisfies the system
\begin{equation}
 \label{pbg}
  \left\{
    \begin{aligned}
     &  \partial_t^2 y -\Delta y +m_1y +b(x)\partial_t z =0&\text{ in } (0,T) \times \Omega, \\ &
  \partial_t^2 z -\gamma^2\Delta z +m_2z =0&\text{ in } (0,T) \times \Omega ,\\&
                 y=z=0&  \text{ on }  (0,T) \times \Gamma, \\&
                 a(x)\partial_t y=0& \text{ on } (0,T) \times \Omega
    \end{aligned}
  \right.
\end{equation}
and 
\begin{align}
\label{0*}
\int_0^T\int_{\Omega_{R_5}} |y|^2+|z|^2 \ dx ds =1.
\end{align}
 It is clear  that $(\partial_t y, \partial_t z)$ satisfies the following system
\begin{equation}
 \label{pbd}
  \left\{
    \begin{aligned}
     &  \partial_t^2 (\partial_t y) -\Delta(\partial_t y) +m_1\partial_t y +b(x)\partial_t(\partial_t z) =0  &\text{ in } (0,T) \times \Omega, \\ &
  \partial_t^2 (\partial_t z) -\gamma^2\Delta (\partial_tz) +m_2\partial_t z=0&\text{ in } (0,T) \times \Omega ,\\&
                 \partial_t y=\partial_t z=0 &  \text{ on }\  (0,T) \times \partial\Omega, \\&
                 a(x)\partial_t y=0& \text{ on }\  (0,T) \times \Omega.
    \end{aligned}
  \right.
\end{equation}
From the first and previous equations in (\ref{pbd}), we deduce that $b(x)\partial_t^2 z=0$ on $supp(a)$. But $supp(b)\subset supp(a)$, so $\partial_t^2 z=0$ on $supp(b)$. Setting $w= \partial_t z$, we have
\begin{equation}
\label{ed}
\left\{
    \begin{aligned}
     &\partial_t w  =0&\text{ in } (0,T) \times \omega_b, \\ &
  \partial_t^2 w -\gamma^2\Delta w +m_2w =0&\text{ in  } (0,T) \times \Omega, \\&
  w=0 &  \text{ on }\  (0,T) \times \partial\Omega,\\&
  w\in   L^2((0,T)\times\Omega).
    \end{aligned}
  \right.
\end{equation} 
Using the first and second equations in (\ref{ed}), we can see that $WF^1(w) \cap (0,T) \times \omega_b\times \R\times \R^n$ is a subset of
\begin{align*}
\{(t,x,\tau,\xi)\in (0,T) \times \Omega\times \R\times \R^n; \tau^2-\gamma^2|\xi|^2=\tau=0\}=(0,T) \times \Omega\times \{0\}\times \{0\}.
\end{align*}
where $WF^1(w)$ denotes the  $H^1$-wavefront set of $w$. Since $B_{R_1}^c \subset \omega_b$, we deduce that $w\in H_{loc}^1((0,T)\times B_{R_1}^c).$ Next, we will show that $w \in H_{loc}^1([0,T]\times R_{R_1})$. Let $\rho_0=( t_0,x_0,\tau_0,\xi_0) \in T^*([0,T]\times B_{R_1})$ and $\Gamma_0$ be the generalized bicharacteristic issued from $\rho_0 $. Set $\{\rho_1:=(0, x_1,\tau_1, \xi_1)\}= \Gamma_0 \cap \{t=0\}$  and $\{\rho_2:= (T,x_2, \tau_2, \rho_2)\} = \Gamma_0 \cap \{t=T\}$, so  we distinguish  two cases, 
 \\
\textbf{$1^{st}$ case: }  $x_1$ or $x_2 \notin B_{R_1}$. In this case $\rho_1$ or $\rho_2 \notin WF^1(w)$). Since $T> T_{\omega_b}$, then using the  propagation of regularity along the bicharacteristic flow of the operator $\partial_t^2 -\gamma^2\Delta $ (see \cite{melrose1,melrose2}),  we obtain $\rho_0 \notin WF^1(w)$.  \\ 
\textbf{$2^{nd}$ case: }  $x_1 , x_2 \in B_{R_1}$. Since $\rho_1,\rho_2\in T^*([0,T]\times B_{R_1})$ and $\omega_b$  controls geometrically  $[0,T]\times\Omega $, then $\Gamma_0$ intersects the region $[0,T]\times (\omega_b\cap \Omega_{R_1})$. But $w \in H_{loc}^1([0,T]\times (\omega_b\cap \Omega_{R_1}))$, then applying again the regularity propagation theorem, we deduce that $\rho_0 \notin WF^1(w)$. Therefore, we conclude that $w\in H_{loc}^1((0,T)\times \Omega)$. Now, set $\tilde{w}= \partial_t w $. Since $\R^n\setminus \Omega_{R_5}\subset \omega_b$, so $\tilde{w}=0$ on $\R^n\setminus \Omega_{R_5}$ and  satisfies   

\begin{equation}
\left\{
    \begin{aligned}
  & \partial_t^2 \tilde{w}  -\gamma^2\Delta \tilde{w}  +m_2\tilde{w}  =0&\text{ in  } (0,T) \times \Omega_{R_5}, \\&
  \tilde{w}=0 &  \text{ on }\  (0,T) \times \partial\Omega_{R_5},\\
  &\tilde{w}  =0&\text{ in } (0,T) \times (\omega_b \cap \Omega_{R_5}) , \\&
 \tilde{w} \in   L^2((0,T)\times\Omega_{R_5})
    \end{aligned}
  \right.
\end{equation} 
Since $\omega_b \cap \Omega_{R_5}$ controls geometrically $\Omega_{R_5}$, then using the classical unique continuation result (see \cite{e8,Burq-Gerard} ), we infer that $\tilde{w} \equiv 0 $ on $(0,T)\times\Omega_{R_5}$. Therefore, the function $z$ satisfies 
\begin{equation}
\label{gh}
\left\{
    \begin{aligned}     
  & -\gamma^2\Delta z + m_2z =0&\text{ in } (0,T) \times \Omega ,\\ &z =0&\text{ in } (0,T) \times \partial\Omega.
    \end{aligned}
  \right.
\end{equation}  
This implies that $z=0$ on $ (0,T) \times\Omega$.  Now, from $\eqref{pbg}$ we obtain 
\begin{equation}
 \label{df}
 \left\{
    \begin{aligned}     
  & \partial_t^2y-\Delta y +m_1y =0&\text{ in  } (0,T)\times \Omega ,\\ & a(x) \partial_t y  =0&\text{ in  } (0,T) \times \Omega, \\& y = 0  &\text{ on  } (0,T) \times \partial\Omega,\\&
  y\in H^1((0,T)\times \Omega)
    \end{aligned}
  \right.
\end{equation} 
Arguing as for $z$, we can prove that $y=0$.  This is in contradiction with  $\eqref{0*}$.
\\ 

\end{proof}

 \section{Proof of Theorem $\ref{theoreme1}$}
Let $R_2 >R_1$.  According to $\eqref{enex1g}$ for $t=nT$, $n\in \N^*$, we have 
\begin{align*}
\int_0^{nT} E^{R_2}(u,v,s)ds \lesssim  C_\varepsilon \Bigg( E_{u,v}(0)+ (1-\frac{1}{\gamma^2})^2 E_{\partial_t u, \partial_t v}(0) +  \int_0^{nT}\int_{\Omega_{R_2}} | u|^2 
\end{align*}
\begin{equation}
\label{4.6}
+|v|^2 dx ds \Bigg) + \varepsilon \int_0^{nT}  E_{u,v}(s) \ ds  + \|(u,v)(0)\|_{L^2}^2.
\end{equation}
Next, using $\eqref{eningg}$ with $R_3= 2R_2$ and $R_4=3R_2$, we get 
\begin{align*}
\int_{kT}^{(k+1)T}& E_{2R_2}(u,v,s) ds \lesssim  C_\varepsilon\int_{kT}^{(k+1)T}\int_{\Omega} a(x)( |\partial_t u |^2 + (1-\frac{1}{\gamma^2})^2|\partial_t^2 u|^2)  dx ds \\& + \varepsilon \int_{kT}^{(k+1)T} E_{u,v}(s) \ ds +  C_\varepsilon \int_{kT}^{(k+1)T} E^{2R_2}(u,v,s)ds
\end{align*}
\begin{equation}
\label{3.100}
+ C_\varepsilon\int_{kT}^{(k+1)T}\int_{\Omega_{3R_2}} | u|^2+|v|^2 dx ds  - \Big[\mathcal{K}_\gamma\Big]_{kT}^{(k+1)T} , \forall \ k \in \N.
\end{equation}
Thus 
\begin{align*}
&\sum_{k=0}^{n-1}\int_{kT}^{(k+1)T} E_{2R_2}(u,v,s)ds \lesssim \sum_{k=0}^{n-1} \Bigg( C_\varepsilon\int_{kT}^{(k+1)T}\int_{\Omega} a(x)( |\partial_t u |^2 \\&+ (1-\frac{1}{\gamma^2})^2|\partial_t^2 u|^2)  dx ds  + \varepsilon \int_{kT}^{(k+1)T} E_{u,v}(s) \ ds  - \Big[\mathcal{K}_\gamma\Big]_{kT}^{(k+1)T}
\end{align*}
\begin{equation}
+  C_\varepsilon \Big(\int_{kT}^{(k+1)T}E^{2R_2}(u,v,s)ds +\int_{kT}^{(k+1)T}\int_{\Omega_{3R_2}} | u|^2+|v|^2 dx ds \Big)\Bigg) , \forall \ k \in \N.
\end{equation}
This gives
\begin{align*}
\int_0^{nT} E_{2R_2}&(u,v,s) \ ds \lesssim  C_\varepsilon\int_0^{nT}\int_{\Omega} a(x)( |\partial_t u |^2 + (1-\frac{1}{\gamma^2})^2|\partial_t^2 u|^2)  dx ds \\& + \varepsilon \int_0^{nT}  E_{u,v}(s) \ ds +  C_\varepsilon \int_0^{nT} E^{2R_2}(u,v,s) \ ds
\end{align*}
\begin{equation}
+  C_\varepsilon\int_0^{nT}\int_{\Omega_{3R_2}} | u|^2+|v|^2 dx ds  - \Big[\mathcal{K}_\gamma\Big]_0^{nT} , \forall \ n \in \N^*.
\end{equation}
From the following estimate
\begin{align*}
&\Big|\mathcal{K}_\gamma(s)\Big| \lesssim E_{u,v}(0), \forall s\geqslant 0, 
\end{align*}
and using $\eqref{decroissance}$ and $\eqref{4.6}$, we deduce that 
\begin{align*}
\int_0^{nT}E_{2R_2}(u,v,s) \ ds  \lesssim  C_\varepsilon( E_{u,v}(0) + (1-\frac{1}{\gamma^2})^2 E_{\partial_t u, \partial_t v}(0)) \
\end{align*}
\begin{equation}
\label{4.5}
+ \varepsilon \int_0^{nT}  E_{u,v}(s) \ ds+  C_\varepsilon \int_0^{nT}\int_{\Omega_{3R_2}} | u|^2+|v|^2 dx ds  , \forall \ n \in \N^*.
\end{equation}
So, combining $\eqref{4.5}$ and $\eqref{4.6}$, we conclude for small enough $\varepsilon$ the following estimate 
\begin{align*}
\int_0^{nT} E_{u,v}(s)ds \lesssim  C_\varepsilon( E_{u,v}(0)+ (1-\frac{1}{\gamma^2})^2 E_{\partial_t u, \partial_t v}(0) )  
\end{align*}
\begin{equation}
\label{enn} 
\|(u,v)(0)\|_{L^2}^2 + C_\varepsilon \int_0^{nT}\int_{\Omega_{3R_2}} (|v|^2 + |u|^2) \ dx ds.
\end{equation}
Next, From $\eqref{4.9}$ with $R_5=3R_2$ we have 
\begin{align*}
\sum_{k=0}^{n-1}\int_{kT}^{(k+1)T}\int_{\Omega_{3R_2}} |v|^2 + |u|^2 \ dx ds &\lesssim \sum_{k=0}^{n-1}\Big(\int_{kT}^{(k+1)T} \int_{\Omega} a(x) |\partial_t u |^2 \ dx ds \\& + \alpha \int_{kT}^{(k+1)T} E_{u,v}(s)\ ds \Big).
\end{align*}
Thus
\begin{equation}
\label{4.10}
\int_0^{nT}\int_{\Omega_{3R_2}} |v|^2 + |u|^2 \ dx ds \lesssim E_{u,v}(0)  + \alpha \int_0^{nT} E_{u,v}(s)\ ds.
\end{equation}
Finally, using $\eqref{4.10}$ for $\alpha$ small enough in $\eqref{enn}$, we find

\begin{equation}
\label{4.11}
\int_0^{nT} E_{u,v}(s)ds \lesssim  C_\varepsilon ( E_{u,v}(0) + (1-\frac{1}{\gamma^2})^2 E_{\partial_t u ,\partial_t v}(0))+\|(u,v)(0)\|_{L^2(\Omega)}^2,
\end{equation}
Therefore
\begin{align*}
\int_0^{+\infty} E_{u,v}(s)ds \lesssim  E_{u,v}(0) +  (1-\frac{1}{\gamma^2})^2 E_{\partial_t u ,\partial_t v}(0)+ \|(u,v)(0)\|_{L^2(\Omega)}^2.
\end{align*}
As the energy is decreasing then
\begin{align*}
(1+t)E_{u,v}(t) &\leqslant \int_0^{+\infty} E_{u,v}(s)ds  +E_{u,v}(0) \\&\lesssim E_{u,v}(0)+ (1-\frac{1}{\gamma^2})^2 E_{\partial_t u ,\partial_t v}(0)
\end{align*}
\begin{equation}
\label{4.12}
+  \|(u,v)(0)\|_{L^2(\Omega)}^2, \text{ for all }  t> 0.
\end{equation}
On the other hand, using $\eqref{enex1g}$, $\eqref{4.10}$ and $\eqref{4.11}$, we deduce that
\begin{equation}
\label{rtr}
\int_\Omega \varphi (|u(t)|^2 +  |v(t)|^2 ) \ dx \lesssim  E_{u,v}(0) + (1-\frac{1}{\gamma^2})^2 E_{\partial_t u ,\partial_t v}(0)+\|(u,v)(0)\|_{L^2(\Omega)}^2.
\end{equation}
Since  $\varphi \equiv 1$ for $|x| \geqslant R_2$,
 \begin{align}
 \label{4012}
 \int_\Omega \varphi ( |u(t)|^2+|v(t)|^2)  \ dx \geqslant  \int_{\Omega_{R_2}^c}  |u(t)|^2+|v(t)|^2 \ dx ,
 \end{align}
 therefore
 \begin{equation}
 \label{4.13}
  \int_{\Omega_{R_2}^c}  |u(t)|^2+|v(t)|^2 \ dx \lesssim  E_{u,v}(0) + (1-\frac{1}{\gamma^2})^2 E_{\partial_t u ,\partial_t v}(0)+\|(u,v)(0)\|_{L^2(\Omega)}^2.
 \end{equation}
Poincare's inequality and the fact that the energie of $(u,v)$ is decreasing gives 
\begin{align}
\label{4.14}
\int_{\Omega_{3R_2}} |u(t)|^2+|v(t)|^2  \ dx \leqslant C_{\Omega} \int_{\Omega_{3R_2}} |\nabla u(t)|^2 +|\nabla v(t)|^2  \  dx \lesssim E_{u,v}(0)
\end{align}
for all $ t > 0$.\\
Adding $\eqref{4.14}$ and $ \eqref{4.13}$, we infer that
\begin{align}
\label{4.16}
\int_{\Omega}  |u(t)|^2+|v(t)|^2 \ dx \lesssim  E_{u,v}(0) + (1-\frac{1}{\gamma^2})^2 E_{\partial_t u ,\partial_t v}(0)+\|(u,v)(0)\|_{L^2(\Omega)}^2,
\end{align} 
for all $ t > 0$.
\begin{proof}[Proof of  Corollary $\ref{corol}$]
From $\eqref{4.12}$, we deduce if $\gamma =1$
\begin{align*}
E_{u,v}(t) \leqslant\frac{C}{t}E_{u,v}(0), \qquad \text{ for all }  t> 0,
\end{align*}
we choose $t$ such that $\frac{C}{t} <1$ and using the semi-group proprety, we conclude that the estimate $\eqref{theorem 2}$. 
\\ and  if $\gamma \neq 1$,  
\begin{align*}
E_{u,v}(t) \leqslant\frac{C}{t}(E_{u,v}(0)+ (1-\frac{1}{\gamma^2})^2 E_{\partial_t u, \partial_t v}(0)), \qquad \text{ for all }  t> 0,
\end{align*}
 according to  [Theoreme $2.1$, \ref{e1}] we infer that  $\eqref{theorem 22}$.

\end{proof}

\bibliographystyle{abbrv}	


\end{document}